\title{The orbit space approach for piecewise smooth vector fields}
\author[O. M. L. Gomide]{Ot\'avio M. L. Gomide}
\address[OMLG]{Department of Mathematics, UFG, IME\\ Goi\^ania-GO, 74690-900, Brazil.}
\email{otavio.marcal@ufg.br}
\author[P. Mattos]{Pedro G. Mattos}
\address[PM]{Department of Mathematics, Unicamp, IMECC\\ Campinas-SP, 13083-970, Brazil}
\email{pedrogmattos@ime.unicamp.br}
\author[R. Var\~ao]{R\'egis Var\~ao}
\address[RV]{Department of Mathematics, Unicamp, IMECC\\ Campinas-SP, 13083-970, Brazil}
\email{varao@unicamp.br}
\subjclass[2010]{34A36, 34A60, 37G15}
\keywords{piecewise smooth differential system, Filippov system, transitivity}
\begin{document}

\begin{abstract}
In this work we develop a well-defined theory of \textit{orbit spaces} for piecewise smooth vector fields (PSVFs). This approach is inspired by the techniques already used in the study of endomorphisms, namely inverse limit analysis, and has been used before for PSVFs \cite{ACV, EMV}. We then apply the construction of our theory to understanding transitivity in PSVFs. Our results prove that the known examples of transitive PSVFs in the literature, the \textit{bean model} \cite{BCE} and the \textit{sphere model} \cite{EJV}, are indeed transitive in the orbit space.
\end{abstract}

\maketitle


\section{Introduction}
\label{sec:introduction}

Piecewise smooth vector fields (PSVFs) have been extensively studied in the last years due to their applicability to model real world problems \cite{liv:Jeffrey-ModelingNonsmoothDynamics}. In light of this, there is a natural interest in understanding the dynamics associated with them, which is very complicated and presents fascinating behaviors \cite{EJV}.

The novelties coming from PSVFs compared to the continuous case resides on a proper or convenient way to define a solution and its behavior when two distinct vector fields meet, namely at the \textit{switching} (or \textit{discontinuity}) \textit{manifold}. There are several ways to define the solutions of a PSVF, and each of them gives rise to a new dynamics \cite{utkin01, utkin03}; nevertheless, there is a special interest in the dynamics given by \textit{Filippov's convention} \cite{filippov-book}, since it provides very accurate models for different kinds of problems \cite{liv:Jeffrey-ModelingNonsmoothDynamics}.
Filippov's convention can be interpreted broadly \cite{liv:Jeffrey-HiddenDynamics}, but under the most common conception it establishes that the valid solutions to a PSVF --- called \textit{Filippov orbits} --- are concatenations of regular orbits going through the regular regions of the space with \textit{sliding orbits} over the switching manifold, more precisely in the \textit{sliding} and \textit{escaping regions}. On these regions, there are many possibilities for the concatenation, so consequently PSVFs presents nonuniqueness of solutions, an important
feature of the  rich and complicated structure of their dynamics.
Many orbits of a PSVF can visit the same point, which is a behavior that does not occur for continuous vector fields, and this prevents the existence of a well-defined flow associated to the PSVF.

Many works have studied the dynamics of PSVFs (see \cite{BONET2017142,GST-generic_bifurcations,NOVAES20154615} and references therein),
but our work has as one of its goals to propose a new way to understand them. Because PSVFs under Filippov's convention allow multiple orbits to pass through the same point, this inherent nonuniqueness of solutions can obscure the true understanding of the system's overall behavior. Therefore in this work we provide a way to understand how the nonuniqueness of solutions impacts the dynamics of a PSVF, and we do so by introducing an associated dynamical system --- called the \textit{orbit space} of the PSVF --- which is able to restore the uniqueness property and also the existence of a well-defined flow. This expands upon previous work \cite{ACV}.

The orbit space associated to a PSVF is the space of all maximal Filippov orbits of the PSVF.
This is inspired by the technique known as \textit{inverse limit}, widely studied in the context of endomorphisms \cite{varao-cantarino, przytycki1976anosov, endoBOOK}. An endomorphism is a noninvertible map, and the inverse limit is simply the space of all possible (discrete) orbits. There exists a very natural dynamics in this space which is just a translation of the orbits of the endomorphism. We apply this idea to the PSVF, since it is also a noninvertible system (see \cref{sec:orbit_space}). The orbit space then carries all the complexity of the system, but without the issues of having multiple orbits going through the same point. Importantly, in the degenerate case the PSVF is a smooth system, the orbit space system is going to be isomorphic to the original smooth system, so its study in the general case is really an extension of the classical theory of smooth flows.

We aim to establish a well-defined theory of orbit spaces for PSVFs, expanding on previous works that have used the concept of orbit spaces (cf. \cite{ACV, EMV}). In this work, our focus will be on understanding the relation between the concept of topological transitivity on the PSVF and on the more appropriate orbit space. Among all properties of smooth dynamical systems, transitivity has always been one of the most fundamental ones. A transitive system is a system that has a dense orbit. This is an interesting concept by itself but it is worth mentioning that transitivity is also one of the ingredients of chaos \cite{Banks01041992, BARNSLEY, devaney, Silverman}.

In the literature, there are two models of chaotic $2$-dimensional PSVFs that have been shown to display a dense orbit in the phase space: the \textit{bean model} \cite{BCE} and the \textit{sphere model} \cite{EJV} (see \cref{fig:bean_and_sphere_models}).
These two articles explore intensively the nonuniqueness of solution, in particular the sliding and escaping regions. These are regions on which many different orbits go through the same points repeatedly. Escaping regions produce so many different orbits that one may have a feeling that transitivity is forcibly more common in the PSVF scenario. This is in some sense true, but one should see it as due to the richness of the dynamics. For instance, in \cite{EJV} the authors provide a first example of transitivity for a PSVF on a $2$-sphere, which is interesting and shows the richness of PSVFs, since a continuous transitive flow on the $2$-sphere is long known not to exist.
Despite the importance of PSVFs, there is still a lack of results about its global dynamics. We may claim that this is due to the scenario of nonuniqueness of solutions, which we propose to properly overcome in the present work. 

\begin{figure}
	\centering
	
	\begin{subfigure}[c]{0.5\textwidth}
		\centering
		\includegraphics{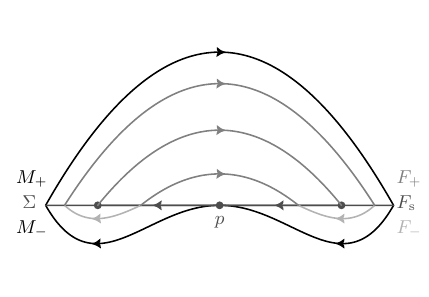}
		\caption[The bean model]{
			The bean model \cite{BCE}.
		}
		\label{fig:bean_model}
	\end{subfigure}
	\begin{subfigure}[c]{0.5\textwidth}
		\centering
		\includegraphics{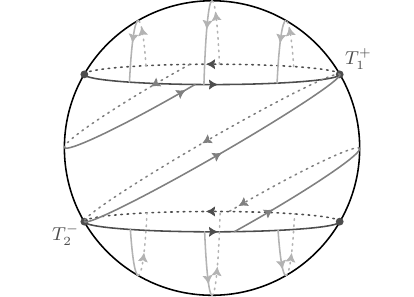}
		\caption[The sphere model]{
			The sphere model \cite{EJV}.
		}
		\label{fig:sphere_model}
	\end{subfigure}
	
	\caption[The bean and sphere models]{
		The bean and sphere models, PSVFs that are transitive. The highlighted points are the tangency points. They play a fundamental role in the topological transitivity of these systems by connecting stable and unstable sliding regions.
	}
	\label{fig:bean_and_sphere_models}
\end{figure}

\subsection{Main results}

Our first main result is showing that the orbit space we present is well defined. This has been done before \cite{ACV} but here we prove it for the more general cases in which the manifold is not necessarily compact and the PSVF is bounded (we also mention the proof for a Lipschitz continuous vector field). For a Manifold $M$ with metric $d$ and a Filippov field $F$ over $M$, we denote its orbit space by $\wt M$, its metric by $\wt d$ and the dynamics on $\wt M$ by $\wt \Phi$.

\begin{introtheorem}
\label{theo:orbit_space_is_metric_and_flow_is_continuous}
Let $M$ be a manifold and $F$ be a bounded Filippov system on $M$. Then $(\wt M, \wt d)$ is a metric space and $\wt \Phi$ is a continuous flow on $\wt M$.
\end{introtheorem}

Then we prove some topological properties of the metric space $\wt M$ that are useful for the study of topological transitivity. We call a topological space that has no isolated points a \textit{perfect} space.

\begin{introtheorem}
\label{theo:topological_properties_of_orbit_space}
Let $M$ be a manifold and $F$ be a bounded Filippov system on $M$ such that $\Tange$ is finite and each tangency point has finite multiplicity. Then the orbit space $\wt M$ is a separable, Baire and perfect metric space.
\end{introtheorem}

In particular this implies that topological transitivity in $\wt M$ is equivalent to the existence of a dense orbit, as a consequence of Birkhoff's transitivity theorem (\cref{theo:birkhoff.transitivity}). Finally, we relate topological transitivity in the PSVF system $(M, F)$ with topological transitivity in the orbit space system $(\wt M, \wt \Phi)$.

\begin{introtheorem}
\label{theo:transitivity_and_topological_transitivity_in_orbit_space}
Let $M$ be a manifold and $F$ be a bounded Filippov system on $M$ such that $\Tange$ is finite and each tangency point has finite multiplicity.
\begin{enumerate}
    \item If the orbit space flow $\wt \Phi$ is transitive, then the Filippov vector field $F$ is also transitive. \label{theo:transitivity_and_topological_transitivity_in_orbit_space(1)}
    \item If $F$ satisfies the specific assumptions presented in \cref{prop:main_transitivity} (which in particular imply transitivity of $F$), then $\wt \Phi$ is also topologically transitive. \label{theo:transitivity_and_topological_transitivity_in_orbit_space(2)}
\end{enumerate}
\end{introtheorem}

This theorem basically states that (\ref{theo:transitivity_and_topological_transitivity_in_orbit_space(1)}) transitivity  ``above'', i.e. in the orbit space system $(\wt M, \wt \Phi)$, ff fi implies transitivity ``below'', in the Filippov system $(M, F)$, and that (\ref{theo:transitivity_and_topological_transitivity_in_orbit_space(2)}) the converse is true for a specific class of Filippov systems. These specific assumptions have to do with how some of the tangency points of the Filippov system are connected to each other. The class of systems for which the converse is true includes the bean model \cite{BCE} and the sphere model \cite{EJV}, the two know examples of transitive Filippov vector fields in the literature. 

\subsection{Structure of the paper}

In \cref{sec:preliminaries} we present our notation and definitions related to piecewise smooth vector fields $F$ on a manifold $M$, and Filippov's convention which defines the piecewise smooth orbits that are accepted as solutions of $F$. This includes defining the escaping, sliding, crossing and tangent regions. We also define topological transitivity for $F$. In \cref{sec:orbit_space} we define the orbit space $\wt M$ of the system $(M, F)$, a flow $\wt \Phi$ on $\wt M$, and a distance function $\wt d$ on $\wt M$ which is related to a Riemannian distance $d$ on $M$. We prove \cref{theo:orbit_space_is_metric_and_flow_is_continuous} in \cref{ssec:properties_orbit_distance}, \cref{theo:topological_properties_of_orbit_space} in \cref{ssec:topology_orbit_space}, and \cref{theo:transitivity_and_topological_transitivity_in_orbit_space}
in \cref{ssec:transitivity}.

\section{Preliminaries}
\label{sec:preliminaries}

Let $M$ be a manifold.
Assume that there exists a compact embedded codimension $1$ submanifold $\Switch = h\inv(0)$ of $M$, where  $h\colon M \to \R$ is a $\Cont^r$  function ($r>1$ large enough) with $0$ as a regular value, which splits $M$ into the disjoint \emph{regular regions} $M_+ := \set{p \in M \st h(p) > 0}$ and $M_- := \set{p \in M \st h(p) < 0}$. We call $\Switch$ the \defemph{discontinuity manifold}, or \defemph{switching manifold}, (generated by $h$) of $M$.

Let $\mathfrak{X}^r$ denote the set of smooth vector fields of class $\Cont^r$. A \defemph{piecewise smooth vector field} on $M$ with discontinuity manifold $\Switch$ is a vector field of the form
    \begin{equation}
    \label{eq:definition.F}
    F(p)=\frac{F_+(p)+F_-(p)}{2}+\sgn(h(p))\frac{F_+(p)-F_-(p)}{2},
    \end{equation}
where $F_+,F_- \in \mathfrak{X}^r(M)$ and the sign function is multivalued at the origin and may assume values in $[-1,1]$. We denote $F = (F_+, F_-)$. Notice that $F$ equals the vector field $F_+$ on region $M_+$, the vector field $F_-$ on region $M_-$, and $F$ is multivalued on $\Switch$. The same construction can be made for more than $2$ regular regions, but we prefer to keep the presentation simple.

The \defemph{Lie derivative} of $h$ in the direction of a vector field $X \in \mathfrak{X}^r(M)$ at $p \in M$ is given by $Xh(p) = \left\langle \nabla h(p), X(p) \right\rangle_p$. In this work we assume that the solutions of a piecewise smooth vector field $F$ are given by \emph{Filippov's convention} \cite{filippov-book}. In this case, it is convenient to classify the points of $p \in \Switch$ into the following types:
\begin{enumerate}
	\item \defemph{tangency points}, which belong to the \defemph{tangent region}
		\begin{equation*}
		\Tange := \set{p \in \Slide \st F_+h(p)F_-h(p) = 0};
		\end{equation*}
	\item \defemph{crossing points}, which belong to the \defemph{crossing region}
		\begin{equation*}
		\Cross := \set{p \in \Slide \st F_+h(p)F_-h(p)> 0};
		\end{equation*}
	\item and \defemph{sliding points}, which belong to the \defemph{sliding region}
		\begin{equation*}
		\Slide := \set{p \in \Slide \st F_+h(p)F_-h(p)< 0}.
		\end{equation*}
	These are further subdivided into
		\begin{enumerate}
		\item the \defemph{stable sliding region} $\Slide[s] := \set{p \in \Slide \st F_+h(p)<0}$ and
		\item and the \defemph{unstable sliding region} $\Slide[u] := \set{p \in \Slide \st F_+h(p)>0}$ (also called the \defemph{escaping region}).
		\end{enumerate}  
\end{enumerate}

Notice that the discontinuity manifold $\Switch$ is the disjoint union of the sets $\Tange$, $\Cross$, $\Slide[s]$ and $\Slide[u]$. Also, the sets $\Cross$, $\Slide[s]$ and $\Slide[u]$ are relative open in $\Switch$.

In order to classify the tangency points we need to define higher order Lie derivatives of $h$. For $X_{1},\cdots, X_{k} \in \mathfrak{X}^r(M)$, the higher order Lie derivatives of $h$ are recursively defined by
    \begin{equation*}
    X_{k}\cdots X_{1}h(p)=\left\langle X_{k}(p), \nabla X_{k-1}\cdots X_{1}h(p)\right\rangle_p,
    \end{equation*}
that is, $X_{k}\cdots X_{1}h(p)$ is the Lie derivative of the smooth function $X_{k-1}\cdots X_{1}h$ in the direction of the vector field $X_{k}$ at $p$. In particular, $X^{k}h(p)$ denotes $X_{k}\cdots X_{1}h(p)$, where $X_{i}=X$, for $i=1,\cdots,k$.

We say that a tangency point $p \in \Switch$ of $F=(F_+,F_-)$ has \defemph{finite multiplicity} if there exist natural numbers $m_+, m_- \geq 2$ such that $F_+^{m_+} h(p)\neq 0$ and $F_-^{m_-} h(p)\neq 0$. In this case, there is only a finite number of regular trajectories of $F_+$ and $F_-$ that arrive at such tangency point or depart from it \cite{AGN}.

\begin{remark}
For our purposes, we assume that the vector fields $F_+$ and $F_-$ are bounded, the tangency set $\Tange$ is a finite set and every tangency point has finite multiplicity.
\end{remark}

The \defemph{Filippov solutions} of a piecewise smooth vector field $F$ are defined as follows. First, we construct a new vector field $F_s(p)$ at $p \in \Slide$, which is given by the convex combination of $F_+(p)$ and $F_-(p)$ that is tangent to $\Slide$. We refer to $F_s$ as the \defemph{sliding vector field} of $F$ on $\Slide$, and it is explicitly given by
    \begin{equation}
    F_s(p) := \frac{1}{F_-h(p) - F_+h(p)} (F_-h(p) F_+(p) - F_+h(p) F_-(p)).
    \end{equation}
Notice that $F_s$ is always well-defined since its denominator is always nonzero in $\Slide$. Also, in some cases $F_s$ can be extended to $\overline{\Slide}$. 

We denote the flow of $F_+$ for time $t$ as $\Phi_+^t$, the flow of $F_-$ for time $t$ as $\Phi_-^t$ and the flow of $F_s$ (over $\Slide$) for time $t$ as $\Phi_s^t$. When $p\in\Cross$, the local solution of $F$ is given by the concatenation of the solutions $\Phi_+^t(p)$ of $F_+$ in $M_+$ and $\Phi_-^t(p)$ of $F_-$ in $M_-$. Now, when $p\in\Slide\cup\Tange$, the local solution is given by any continuous piecewise smooth parameterized trajectory obtained by the concatenation of $p$, $\Phi_+^t(p)$ (restricted to $M_+$), $\Phi_-^t(p)$ (restricted to $M_-$) and $\Phi_s^t(p)$ (we also consider its extension to $\Tange$ when it is possible). In this way, the solutions of $F$ are all the continuous, piecewise smooth trajectories whose smooth pieces are integral trajectories of $F_+$, $F_-$ or $F_s$.
(When the manifold has boundary, this may include trajectories that reach the boundary of the space and stop there. In this case the trajectory is not defined for all time values.)
The pieces must be glued to each other on a point of $\Switch$. We also refer to these solutions as \defemph{orbits} of $F$. A maximal orbit is an orbit whose interval of definition cannot be enlarged.

Notice that this approach may give rise to a lack of uniqueness of solutions, since different trajectories on $M_+$, $\Slide$ and $M_-$ may pass through the same point, hence a flow $\Phi$ cannot be defined for every Filippov PSVF. In light of this, we introduce in \cref{sec:orbit_space} a new flow $\wt \Phi$ defined on a larger space $\wt M$ related to the PSVF, which will be able to restore the idea of uniqueness lost in this scenario.

A special attention must be paid to the singularities of a PSVF $F=(F_+,F_-)$. Since we consider a new way to define solutions, we must distinguish some points of $\Switch$ which will also behave as singularities in a certain way. A point $p \in \Switch$ is said to be a \defemph{$\Sigma$-singularity} of $F$ provided that $p$ is either a point of $\Tange$ such that $F_+(p), F_-(p)\neq 0$, an equilibrium of $F_+$ or $F_-$, or an equilibrium of $F_s$ (known as \defemph{pseudo-equilibrium} of $F$). A point $p \in \Switch$ which is not a $\Sigma$-singularity of $F$ is also referred as a \defemph{regular-regular} point of $F$. We say that $\gamma$ is a \defemph{regular orbit} of $F=(F_+,F_-)$ if it is a piecewise smooth curve such that $\gamma\cap M_{+}$ and $\gamma\cap M_{-}$ are unions of regular orbits of $F_+$ and $F_-$, respectively, and  $\gamma\cap\Switch\subset\Cross$.
More details on the classification of $\Sigma$-singularities can be found in \cite{GST-generic_bifurcations, Kuznetsov}

In this work we use the following classical conception of transitivity for continuous flows. Let $M$ be a metric space and $\Phi$ be a continuous flow on $M$. We say $\Phi$ is \defemph{topologically transitive} if, for every pair of non-empty open sets $U$ and $V$ in $M$, there is a strictly positive time $t > 0$ such that $\Phi^t(U) \cap V \neq \emptyset$. We say $\Phi$ is \defemph{transitive} if there exists an orbit of the system that is dense in $M$. These different concepts of transitivity are related in the following result (see \cite{grosse2011linear}).

\begin{theorem}[Birkhoff transitivity theorem]
\label{theo:birkhoff.transitivity}
Let $M$ be a perfect (has no isolated points), separable, Baire metric space and $\Phi$ a continuous flow on it. Then $\Phi$ is topologically transitive if, and only if it is transitive (has a dense orbit).
\end{theorem}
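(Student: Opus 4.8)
The plan is to prove the two implications separately; the substantive one is ``topologically transitive $\Rightarrow$ has a dense orbit'', which I would obtain from the Baire category theorem, while the reverse implication is elementary modulo a small point about \emph{strictly} positive iteration times.

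For the easy direction, suppose $x\in M$ has dense forward orbit $O^+(x)=\{f^n(x):n\ge 0\}$, and let $U,V\subseteq M$ be nonempty open. Density gives $k\ge 0$ with $f^k(x)\in U$. To produce a strictly positive iterate landing in $V$, I would use that $M$ has no isolated points, so that deleting finitely many points from a dense set keeps it dense: for any nonempty open $W$ and finite $F\subseteq M$, the set $W\setminus F$ is open (finite sets are closed) and nonempty (otherwise $W$ would be finite, forcing an isolated point), hence it meets $O^+(x)$. Applying this to the tail $O^+(x)\setminus\{f^0(x),\dots,f^k(x)\}$, which is therefore still dense, it meets $V$; thus $f^{j}(x)\in V$ for some $j\ge k+1$, and with $m:=j-k\ge 1$ we get $f^m(f^k(x))\in V$ with $f^k(x)\in U$, i.e. $f^m(U)\cap V\ne\emptyset$. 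Hence $f$ is topologically transitive.

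For the converse, the key reformulation is that ``$f^n(U)\cap V\ne\emptyset$'' is equivalent to ``$U\cap f^{-n}(V)\ne\emptyset$'', and $f^{-n}(V)$ is open by continuity of $f$. Using separability, fix a countable base $\{V_i\}_{i\ge 1}$ of nonempty open sets and set $G_i:=\bigcup_{n\ge 1} f^{-n}(V_i)$, an open set. Topological transitivity says exactly that for every nonempty open $U$ there is $n\ge 1$ with $U\cap f^{-n}(V_i)\ne\emptyset$, i.e. each $G_i$ is dense. Since $M$ is a complete metric space, the Baire category theorem gives that $G:=\bigcap_i G_i$ is a dense $G_\delta$, in particular nonempty. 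Picking $x\in G$, for each $i$ there is $n_i\ge 1$ with $f^{n_i}(x)\in V_i$, and since $\{V_i\}$ is a base this means $O^+(x)$ meets every nonempty open set, so $x$ has a dense orbit.

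I do not expect a genuine obstacle here, the result being classical; the only things requiring care are identifying $\bigcup_{n\ge 1} f^{-n}(V_i)$ as the natural dense open sets to intersect, and the bookkeeping of strictly positive times in the first implication (which is precisely where the ``no isolated points'' hypothesis is used; separability supplies the countable base and completeness feeds Baire). The statement is quoted for a map $f$ but used in the paper for the time-$t$ maps of a flow $\Phi$; the same argument applies verbatim with $n\ge 1$ replaced by $t>0$, taking $G_i:=\bigcup_{t>0}\Phi^{-t}(V_i)$, so that a $G_\delta$-generic $x$ has $\{\Phi^t(x):t>0\}$ dense.
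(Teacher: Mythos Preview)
Your proof is correct and is the standard Baire-category argument for this classical result. Note, however, that the paper does not supply its own proof of this theorem: it is stated with a reference (\cite{grosse2011linear}) and then invoked as a black box in Proposition~\ref{theo:TopTransitivy}, so there is nothing in the paper to compare your argument against beyond the citation.
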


Finally, we say that a Filippov PSVF is \defemph{transitive} if there exists a Filippov orbit that is dense in the space.

\section{Orbit space}
\label{sec:orbit_space}

Let $M$ be a manifold (possibly with boundary), which will be the \defemph{phase space} of the system, and let $F$ be a bounded Filippov vector field on $M$. (Notice that $F$ being bounded is equivalent to both $F_+$ and $F_-$ being bounded, since from \cref{eq:definition.F} we have $\nor{F} \leq \nor{F_+} + \nor{F_-}$.) We denote the supremum norm as $\nor{F}$, and assume $\nor{F} > 0$. The \defemph{orbit space} of the system is the set of all maximal orbits of $F$, denoted $\wt M$. Using the PSVF given by $F$ we can define a flow on $\wt M$ by
    \begin{align*}
    \func{\wt \Phi^t}{\wt M}{\wt M}{\gamma}{
    \begin{aligned}[t]
        \func{\wt \Phi^t(\gamma)}{\R}{M}{s}{\gamma(t+s).}
    \end{aligned}
    }
    \end{align*}
(The flow may not actually be defined for every $\R$, since the trajectory $\gamma \in \wt M$ may not be defined for all time values. In this case the flow is defined only when the definition makes sense.) This results in a continuous dynamical system on the orbit space (\cref{prop:orbit_space_flow_is_continuous}). In order to understand this flow we need more structure on the orbit space $\wt M$, so we shall introduce the structure of a metric space for $\wt M$.

\subsection{The orbit distance function}

The distance function on $M$ is the distance induced from a Riemannian metric on $M$, and is denoted by $d\colon M \times M \to \R$. Using this distance function $d$, we define the \defemph{integral distance} $\wt d$ on the orbit space $\wt M$ by 
    \begin{align*}
        \func{\wt d}{\wt M \times \wt M}{\R}{(\gamma_0,\gamma_1)}{\wt d(\gamma_0, \gamma_1) := \sum_{i \in \Z} \frac{1}{2^{\abs{i}}} \int_{i}^{i+1} d(\gamma_0(t), \gamma_1(t)) \d t}.
    \end{align*}

We denote the \defemph{ball} of center $\gamma \in \wt M$ and radius $r \in \R_{\geq 0}$ for the distance $\wt d$ by $\wtbola{\gamma}{r}$.
Notice that in the definition of our metric we are assuming that our orbits are defined for every $t \in \R$. We will do so throughout the rest of the paper.

The orbit space with this distance function becomes a metric space (\cref{prop:orbit_space_is_metric}). This has already been shown for compact spaces \cite{ACV}, but here we assume only that the Filippov vector field is bounded. We must show that the sum in the definition of the distance is always finite and that it is in fact a distance. First we prove the following lemma, which assumes the vector field $F$ is bounded and shows as a consequence that specific suprema related to the distance of trajectory points must also be bounded.

\begin{lemma}
\label{lemma:supremum.majoration}
Let $M$ be a manifold and $F$ be a bounded Filippov vector field on $M$.
For every $\gamma_0, \gamma_1 \in \wt M$ and $n \in \Z$,
    \begin{equation}
    \label{eq:sup.bounded}
    \sup_{n \leq t < n+1} d(\gamma_0(t), \gamma_1(t)) \leq d(\gamma_0(0), \gamma_1(0)) + 2\nor{F}(1+|n|).
    \end{equation}
\end{lemma}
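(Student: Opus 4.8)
The plan is to bound the distance between $\gamma_0(t)$ and $\gamma_1(t)$ by comparing each to its value at an integer time, and then telescoping from time $0$. The key observation is that the arc length of any orbit of $Z$ over a time interval of length $\tau$ is at most $\nor{Z}\tau$, since each smooth piece of the trajectory is an integral curve of $Z_+$, $Z_-$ or $Z_s$, and all of these vector fields have norm bounded by $\nor{Z}$ (the sliding field $Z_s$ is a convex combination of $Z_+$ and $Z_-$, so $\nor{Z_s} \le \nor{Z}$ as well; stationary extensions contribute zero length). Hence for any $s, t$ in the domain we have $d(\gamma(s), \gamma(t)) \le \nor{Z}\,\abs{t-s}$, by integrating the speed along the curve.

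The main steps would be as follows. First, fix $\gamma_0, \gamma_1 \in \wt M$ and $n \in \Z$, and let $t \in [n, n+1)$. By the triangle inequality,
    \begin{equation*}
    d(\gamma_0(t), \gamma_1(t)) \le d(\gamma_0(t), \gamma_0(0)) + d(\gamma_0(0), \gamma_1(0)) + d(\gamma_1(0), \gamma_1(t)).
    \end{equation*}
Second, apply the arc-length bound from the previous paragraph to the first and third terms: $d(\gamma_j(t), \gamma_j(0)) \le \nor{Z}\,\abs{t} \le \nor{Z}(\abs{n}+1)$, since $\abs{t} < \abs{n}+1$ for $t \in [n,n+1)$. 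Combining these gives $d(\gamma_0(t), \gamma_1(t)) \le d(\gamma_0(0), \gamma_1(0)) + 2\nor{Z}(1+\abs{n})$, and taking the supremum over $t \in [n, n+1)$ yields~\eqref{eq:sup.bounded}.

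A small technical point to address is that an orbit may not be defined at all times in $[n, n+1)$ or at $0$; here one should work with the extended trajectory $\overline\gamma$ introduced above, noting that the stationary portions have zero speed and hence the arc-length estimate still holds for $\overline\gamma$ on all of $\R$, and that $\overline\gamma(0)$ makes sense. The main obstacle — though it is more a matter of care than of depth — is precisely making the arc-length argument rigorous across the concatenation points on $\Switch$: one must check that the piecewise-smooth curve $\gamma$ is genuinely Lipschitz in $t$ with constant $\nor{Z}$, which follows because it is continuous, its one-sided derivatives exist and are bounded in norm by $\nor{Z}$ everywhere except at the finitely many (per bounded time interval) switching instants, so the length computation $\operatorname{length}(\gamma|_{[0,t]}) = \int_0^t \abs{\dot\gamma(\tau)}\,d\tau \le \nor{Z}\,\abs{t}$ goes through and dominates $d(\gamma(0),\gamma(t))$.
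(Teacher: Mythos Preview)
Your argument is correct. Both your proof and the paper's rely on the same Lipschitz estimate $d(\gamma(s),\gamma(t)) \le \nor{Z}\,\abs{t-s}$, but the paper organizes the argument differently: it first proves the one-step recursion
\[
\sup_{i \le t < i+1} d(\gamma_0(t),\gamma_1(t)) \le \sup_{i-1 \le t < i} d(\gamma_0(t),\gamma_1(t)) + 2\nor{Z}
\]
(and its mirror for decreasing $i$), then inducts to reduce to $\sup_{0 \le t < 1}$, and finally bounds that last supremum by $d(\gamma_0(0),\gamma_1(0)) + 2\nor{Z}$. You bypass the induction entirely by applying the triangle inequality through the single point $t=0$ and using $\abs{t} \le \abs{n}+1$ for $t \in [n,n+1)$ in one shot. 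Your route is shorter and loses nothing, since the paper's intermediate one-step inequalities are not reused elsewhere. (A minor remark: your opening sentence mentions ``telescoping from time $0$'', but what you actually do is a single direct comparison to time $0$; the telescoping description better matches the paper's approach than your own.) Your handling of the extended trajectories and of the Lipschitz estimate across switching instants is also appropriate.
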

\begin{proof}
We will first prove that, for every $i \in \Z$,
    \begin{equation}
    \label{lemma:supremum.majoration.positive}
    \sup_{i \leq t < i+1} d(\gamma_0(t), \gamma_1(t)) \leq \sup_{i-1 \leq t < i} d(\gamma_0(t), \gamma_1(t)) + 2\nor{F}.
    \end{equation}
For every $t \in \left[i,i+1\right[$ it follows from the triangle inequality that
    \begin{equation*}
    d(\gamma_0(t), \gamma_1(t)) \leq d(\gamma_0(t), \gamma_0(i)) + d(\gamma_0(i), \gamma_1(i)) + d(\gamma_1(i), \gamma_1(t)).
    \end{equation*}
Since the vector field is bounded by $\nor{F}$, it follows from the mean value inequality that $d(\gamma_0(t), \gamma_0(i)) \leq \nor{F}|t-i|$ and $d(\gamma_1(i), \gamma_1(t)) \leq \nor{F}|i-t|$, hence
    \begin{align*}
    \sup_{i \leq t < i+1} d(\gamma_0(t), \gamma_1(t)) &\leq \sup_{i \leq t < i+1} d(\gamma_0(t), \gamma_0(i)) + d(\gamma_0(i), \gamma_1(i)) + \sup_{i \leq t < i+1} d(\gamma_1(i), \gamma_1(t)) \\
        &\leq \sup_{i \leq t < i+1} \nor{F}|t-i| + d(\gamma_0(i), \gamma_1(i)) + \sup_{i \leq t < i+1} \nor{F}|i-t| \\
        &\leq \sup_{i-1 \leq t < i} d(\gamma_0(t), \gamma_1(t)) + 2\nor{F}.
    \end{align*}
Analogously, for every $i \in \Z$ we can also obtain that
    \begin{equation}
    \label{lemma:supremum.majoration.negative}
    \sup_{i-1 \leq t < i} d(\gamma_0(t), \gamma_1(t)) \leq \sup_{i \leq t < i+1} d(\gamma_0(t), \gamma_1(t)) + 2\nor{F}.
    \end{equation}

By induction on $n$, using \cref{lemma:supremum.majoration.positive} for positive $n$ and \cref{lemma:supremum.majoration.negative} for negative $n$, we conclude that
    \begin{equation}
    \label{eq:sup.bounded.by.sup}
    \sup_{n \leq t < n+1} d(\gamma_0(t), \gamma_1(t)) \leq \sup_{0 \leq t < 1} d(\gamma_0(t), \gamma_1(t)) + 2\nor{F}\abs{n}.
    \end{equation}

Finally, for every $t \in \left[0,1\right]$,
    \begin{align*}
    d(\gamma_0(t), \gamma_1(t)) &\leq d(\gamma_0(t), \gamma_0(0)) + d(\gamma_0(0), \gamma_1(0)) + d(\gamma_1(0), \gamma_1(t)) \\
        &\leq \nor{F}\abs{-t} + d(\gamma_0(0), \gamma_1(0)) + \nor{F}\abs{t} \\
        &= d(\gamma_0(0), \gamma_1(0)) + 2\nor{F}\abs{t},
    \end{align*}
so $\sup_{0 \leq t < 1}  d(\gamma_0(t), \gamma_1(t)) \leq d(\gamma_0(0), \gamma_1(0)) + 2\nor{F}$. From this equation and \cref{eq:sup.bounded.by.sup} we obtain \cref{eq:sup.bounded}.
\end{proof}

\begin{proposition}
\label{prop:orbit_space_is_metric}
Let $M$ be a manifold and $F$ be a bounded Filippov system on $M$.
The function $\wt d$ is a distance function on $\wt M$.
\end{proposition}
\begin{proof}
The most important part is to prove the function is well-defined in the sense that the infinite sum always has a finite value. For this we will use the fact that the vector field is bounded. Let $\gamma_0, \gamma_1 \in \wt M$ and $d_0 := d(\gamma_0(0), \gamma_1(0))$. From \cref{lemma:supremum.majoration} it follows (using that $\sum_{i=n}^{\infty} \frac{i}{2^i} = \frac{n+1}{2^{n-1}}$) that
    \begin{align*}
    d(\gamma_0, \gamma_1) &= \sum_{i \in \Z} \frac{1}{2^{\abs{i}}} \int_{i}^{i+1} d(\gamma_0(t), \gamma_1(t)) \d t \\
        &\leq \sum_{i \in \Z} \frac{1}{2^{\abs{i}}} \sup_{i \leq t < i+1} d(\gamma_0(t), \gamma_1(t)) \\
        &\leq \sum_{i \in \Z} \frac{d_0 + 2\nor{F}(1+\abs{i})}{2^{\abs{i}}} \\
        &= \sum_{i \in \Z} \frac{d_0+2\nor{F}}{2^{\abs{i}}} + 2\sum_{i \geq 1} \frac{2\nor{F}i}{2^{i}} \\
        &= 3(d_0+2\nor{F}) + 4\nor{F}\sum_{i \geq 1} \frac{i}{2^{i}} \\
        &= 3d_0 + 14\nor{F} < \infty.
    \end{align*}
This shows that $\wt d$ is well-defined.

Now we show the properties of a distance function. If $\gamma_0 = \gamma_1$, then $\wt d(\gamma_0, \gamma_1) = \sum_{i \in \Z} \frac{1}{2^{\abs{i}}} 0 = 0$; if $\wt d(\gamma_0, \gamma_1) = 0$, then $\int_{i}^{i+1} d(\gamma_0(t), \gamma_1(t)) \d t = 0$ for every $i \in \Z$, hence $\gamma_0(t) = \gamma_1(t)$ for every $t \in \left[i, i+1\right[$, therefore $\gamma_0 = \gamma_1$. Finally, the symmetry and the triangle inequality of $\wt d$ follow directly from the respective properties of $d$.
\end{proof}

The summation of suprema that appeared in the preceding proposition (in order to prove that the distance is finite) motivates the definition of another distance function on $\wt M$, the \defemph{supremum distance}
    \begin{align*}
        \func{\wt d_{\sup}}{\wt M \times \wt M}{\R}{(\gamma_0, \gamma_1)}{\wt d_{\sup}(\gamma_0, \gamma_1) := \sum_{i \in \Z} \frac{1}{2^{\abs{i}}} \sup_{i \leq t < i+1} d(\gamma_0(t), \gamma_1(t))}.
    \end{align*}

This function can be proven to be a distance function in the same way that was done for $\wt d$. These distance functions are topologically equivalent, as the following proposition shows, and thus will be used interchangeably when analysing topological properties of the orbit space.

\begin{proposition}
\label{prop:distances_are_topologically_equivalent}
Let $M$ be a manifold and $F$ be a bounded Filippov vector field on $M$.
The distance functions $\wt d$ e $\wt d_{\sup}$ are topologically equivalent.
\end{proposition}
\begin{proof}
Let us denote the balls of center $\gamma \in \wt M$ and radius $r>0$ for the distances $\wt d$ and $\wt d_{\sup}$ by $\wtbola{\gamma}{r}$ and $\wtbola[_{\sup}]{\gamma}{r}$, respectively, and their topologies by $\mathcal{T}$ and $\mathcal{T}_{\sup}$. We will prove that each topology is finer than the other.

($\mathcal{T} \subseteq \mathcal{T}_{\sup}$) For every $\gamma_0, \gamma_1 \in \wt M$
    \begin{equation*}
    	\begin{split}
    	\wt d(\gamma_0, \gamma_1) &= \sum_{i \in \Z} \frac{1}{2^{\abs{i}}} \int_{i}^{i+1} d(\gamma_0(t), \gamma_1(t)) \d t  \\
    		&\leq \sum_{i \in \Z} \frac{1}{2^{\abs{i}}} \sup_{i \leq t < i+1} d(\gamma_0(t), \gamma_1(t)) = \wt d_{\sup}(\gamma_0, \gamma_1).
    	\end{split}
        \end{equation*}
This implies that every ball of $\wt d_{\sup}$ is contained in the ball of $\wt d$ with same center and radius, hence that the topology generated by $\wt d_{\sup}$ is finer than the one generated by $\wt d$.

($\mathcal{T}_{\sup} \subseteq \mathcal{T}$) Take $\gamma \in \wt M$ and $r' > 0$ and consider the ball $\wtbola[_{\sup}]{\gamma}{r'}$ with center point $\gamma$ and radius $r'$. We must find $r > 0$ such that $\wtbola{\gamma}{r} \subseteq \wtbola[_{\sup}]{\gamma}{r'}$. Suppose, for the sake of contradiction, that such $r$ did not exist. In that case, there would exist a sequence $(r_n)_{n \in \N}$ of positive real numbers such that $r_n \to 0$ and, for every $n \in \N$, an orbit $\gamma_n \in \wt M$ such that $\gamma_n \in \wtbola{\gamma}{r_n}$ and $\gamma_n \notin \wtbola[_{\sup}]{\gamma}{r'}$, which means that
    \begin{equation}
    \label{eq:metrica.integral.menor}
    \wt d(\gamma, \gamma_n) = \sum_{i \in \Z} \frac{1}{2^{\abs{i}}} \int_{i}^{i+1} d(\gamma(t), \gamma_n(t)) \d t < r_n,
    \end{equation}
and
    \begin{equation}
    \label{eq:metrica.supremo.maior}
     \wt d_{\sup}(\gamma, \gamma_n) = \sum_{i \in \Z} \frac{1}{2^{\abs{i}}} \sup_{i \leq t < i+1} d(\gamma(t), \gamma_n(t)) \geq r'.
    \end{equation}
From this it would follow that, for every $i \in \Z$, each term $\int_{i}^{i+1} d(\gamma(t), \gamma_n(t)) \d t$ of the summation in \cref{eq:metrica.integral.menor} would converge to $0$ as $n \to \infty$. Therefore, by continuity of the orbits in $\wt M$, each term $\sup_{i \leq t < i+1} d(\gamma(t), \gamma_n(t))$ of the summation in \cref{eq:metrica.supremo.maior} would also converge to $0$ as $n \to \infty$, while the summation in \cref{eq:metrica.supremo.maior} would be bounded below by $r'>0$. This would lead to the following contradiction.

Since the vector field is bounded by $\nor{F}$, it would follow from \cref{lemma:supremum.majoration} that, for every $n \in \N$ and every $i_0 \in \N$ big enough,
    \begin{equation*}
    \sum_{|i| > i_0} \frac{1}{2^{\abs{i}}} \sup_{i \leq t < i+1} d(\gamma(t), \gamma_n(t)) \leq \frac{d(\gamma(0), \gamma_n(0)) + 2\nor{F}(i_0+3)}{2^{i_0 - 1}} \leq \frac{r'}{2},
    \end{equation*}
so that
    \begin{equation*}
    \sum_{|i| \leq i_0} \frac{1}{2^{\abs{i}}} \sup_{i \leq t < i+1} d(\gamma(t), \gamma_n(t)) \geq \frac{r'}{2} > 0.
    \end{equation*}
But, for every $|i| \leq i_0$, $\sup_{i \leq t < i+1} d(\gamma(t), \gamma_n(t)) \to 0$ as $n \to \infty$, so there would be $n_0 \in \N$ big enough such that, for every $n \geq n_0$ and every $|i| \leq i_0$,
    \begin{equation*}
    \sup_{i \leq t < i+1} d(\gamma(t), \gamma_n(t)) < \frac{r'}{2(2i_0+1)},
    \end{equation*}
and so
    \begin{equation*}
    \sum_{|i| \leq i_0} \frac{1}{2^{\abs{i}}} \sup_{i \leq t < i+1} d(\gamma(t), \gamma_n(t)) < (2i_0+1)\frac{r'}{2(2i_0+1)} = \frac{r'}{2}.
    \end{equation*}

This contradiction shows that $\gamma' \in \wtbola[_{\sup}]{\gamma}{r'}$, hence that the topology generated by $\wt d$ is finer than the one generated by $\wt d_{\sup}$.
\end{proof}

Besides topological equivalence, if we assume the piecewise smooth vector field $F$ is Lipschitz continuous (which is true when $M$ is compact), then we can also show that $\wt d$ and $\wt d_{\sup}$ are Lipschitz equivalent, hence uniformly equivalent, which is stronger than topological equivalence. This is relevant when we calculate the topological entropy of our system, such as in \cite{ACV, EMV}, since in this case the value of the entropy is the same whether we use $\wt d$ and $\wt d_{\sup}$.

\begin{remark}
If our PSVF has Lipschitz constant $K \geq 0$ and we drop the boundedness condition, then in order for $\wt d$ and $\wt d_{\sup}$ to have finite values and thus be well-defined we must redefine them, substituting the constant $\frac{1}{2}$ in the series with any $\alpha > e^K$. We will not prove this here to avoid repetition, but the calculations follow methods analogous to those of \cref{lemma:supremum.majoration,prop:orbit_space_is_metric}.
\end{remark}

\begin{proposition}
Let $M$ be a manifold and $F$ be a Lipschitz continuous Filippov vector field on $M$.
The distance functions $\wt d$ e $\wt d_{\sup}$ are Lipschitz equivalent.
\end{proposition}
\begin{proof}
We have already shown in \cref{prop:distances_are_topologically_equivalent} that $\wt d \leq \wt d_{\sup}$. Let $K \geq 0$ be a Lipschitz constant for $F$.

Given $\gamma_0, \gamma_1 \in \wt M$ and $i \in \Z$, let $t_i \in [i, i+1]$ be such that
	\begin{equation*}
	\sup_{i \leq t < i+1} d(\gamma_0(t), \gamma_1(t)) = d(\gamma_0(t_i), \gamma_1(t_i)).
	\end{equation*}
Since $F$ is Lipschitz continuous, it follows (cf. \cite[\S 9.2]{Coleman}) that, for every $t \in [i, i+1]$,
	\begin{equation*}
	d(\gamma_0(t_i), \gamma_1(t_i)) \leq e^K d(\gamma_0(t), \gamma_1(t)),
	\end{equation*}
which implies that
	\begin{equation*}
	\sup_{i \leq t < i+1} d(\gamma_0(t), \gamma_1(t)) = \int_{i}^{i+1} \sup_{i \leq t < i+1} d(\gamma_0(t), \gamma_1(t)) \d t \leq \int_{i}^{i+1} e^K d(\gamma_0(t), \gamma_1(t)) \d t.
	\end{equation*}
Therefore we obtain that
	\begin{equation*}
	\begin{split}
	\wt d_{\sup}(\gamma_0, \gamma_1) &= \sum_{i \in \Z} \frac{1}{2^{\abs{i}}} \sup_{i \leq t < i+1} d(\gamma_0(t), \gamma_1(t)) \\
		&\leq \sum_{i \in \Z} \frac{1}{2^{\abs{i}}} \int_{i}^{i+1} e^K d(\gamma_0(t), \gamma_1(t)) \d t = e^K \wt d(\gamma_0, \gamma_1).
	\end{split}	
	\end{equation*}
This shows that $\wt d_{\sup} \leq e^K \wt d$, therefore $\wt d$ e $\wt d_{\sup}$ are Lipschitz equivalent.
\end{proof}

Throughout the rest of this paper we do not assume that $F$ is Lipschitz continuous.

\subsection{Properties of the orbit distance function}
\label{ssec:properties_orbit_distance}

It is very important to understand the intuitive meaning of the metric $\wt d$ (or $\wt d_{\sup}$). Two orbits being close in the orbit space $\wt M$ should be the same as the points of these orbits being sufficiently close for a sufficient amount of time in the phase space $M$ (see \cref{fig:3.4e6}). The following \cref{lemma:close.orbit.points.imply.close.orbits,lemma:close.orbits.imply.close.orbit.points} provide a precise meaning of these ideas. We first show that if the points of two orbits are close for enough time in the phase space, then the orbits are close in the orbit space.

\begin{lemma}
\label{lemma:close.orbit.points.imply.close.orbits}
Let $M$ be a manifold and $F$ be a bounded Filippov system on $M$.
Given $\varepsilon > 0$, there exist $\tau > 0$ and $\delta > 0$ such that, for every $\gamma_0, \gamma_1 \in \wt M$, if $d(\gamma_0(t),\gamma_1(t)) < \delta$ for every $t \in \left]-\tau, \tau\right[$, then $\wt d(\gamma_0,\gamma_1) < \varepsilon$.
\end{lemma}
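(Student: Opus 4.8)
The plan is to break the series defining $\wt d(\gamma_0,\gamma_1)$ at some integer $\tau$: a finite \emph{central block} of indices $-\tau\le i\le\tau-1$, whose union of intervals $[i,i+1]$ is exactly $[-\tau,\tau]$, and an infinite \emph{tail} consisting of the indices $i\ge\tau$ and $i\le-\tau-1$, all of which satisfy $\abs{i}\ge\tau$. The tail will be controlled uniformly in the pair of orbits by Lemma~\ref{lemma:supremum.majoration}, which bounds $\sup_{i\le t<i+1}d(\gamma_0(t),\gamma_1(t))$ by $d(\gamma_0(0),\gamma_1(0))+2\nor{Z}(1+\abs{i})$ — a quantity growing only linearly in $\abs{i}$, hence summable against the weights $2^{-\abs{i}}$. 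The central block will be controlled by the hypothesis that the orbits stay within $\delta$ on $[-\tau,\tau]$.

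Carrying this out, I would first choose $\tau$ and then $\delta$, in that order (which is essential). Since $\sum_{i\in\Z}2^{-\abs{i}}=3$ and $\sum_{i\in\Z}\frac{1+2\nor{Z}(1+\abs{i})}{2^{\abs{i}}}<\infty$, there is an integer $\tau\ge 1$, depending only on $\varepsilon$ and $\nor{Z}$, with
\[
\sum_{\abs{i}\ge\tau}\frac{1+2\nor{Z}(1+\abs{i})}{2^{\abs{i}}}<\frac{\varepsilon}{2};
\]
then I set $\delta:=\min\{1,\varepsilon/6\}$. Now suppose $d(\gamma_0(t),\gamma_1(t))<\delta$ for all $t\in[-\tau,\tau]$ (reading $\gamma_0,\gamma_1$ through their extensions $\overline\gamma_0,\overline\gamma_1$ if their domains are bounded, as in the definition of $\wt d$). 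For $-\tau\le i\le\tau-1$ we have $[i,i+1]\subseteq[-\tau,\tau]$, so $\int_i^{i+1}d(\gamma_0(t),\gamma_1(t))\,\d t<\delta$, and the central contribution is at most $\delta\sum_{i\in\Z}2^{-\abs{i}}=3\delta\le\varepsilon/2$. For the tail, $0\in[-\tau,\tau]$ forces $d(\gamma_0(0),\gamma_1(0))<\delta\le 1$, so Lemma~\ref{lemma:supremum.majoration} gives $\int_i^{i+1}d(\gamma_0(t),\gamma_1(t))\,\d t\le 1+2\nor{Z}(1+\abs{i})$ for every tail index, whence the tail contribution is at most $\sum_{\abs{i}\ge\tau}\frac{1+2\nor{Z}(1+\abs{i})}{2^{\abs{i}}}<\varepsilon/2$. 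Adding the two bounds yields $\wt d(\gamma_0,\gamma_1)<\varepsilon$.

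I do not anticipate a real obstacle here: the substantive estimate is already contained in Lemma~\ref{lemma:supremum.majoration}, and the remainder is bookkeeping. The two points that require care are the quantifier order — $\tau$ must be fixed before $\delta$ and must not depend on the particular orbits $\gamma_0,\gamma_1$ — and the verification that $\{\,i:-\tau\le i\le\tau-1\,\}$ and $\{\,i:i\ge\tau\,\}\cup\{\,i:i\le-\tau-1\,\}$ genuinely partition $\Z$, so nothing is omitted or double counted. I would also check the finitely-defined-orbit case by running the same computation on the extended trajectories, which is legitimate since $\wt d$ is defined through them.
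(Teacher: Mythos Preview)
Your proposal is correct and follows essentially the same approach as the paper's proof: both split the series defining $\wt d$ into a central block controlled by the hypothesis and a tail controlled by Lemma~\ref{lemma:supremum.majoration}, with $\tau$ chosen first to make the tail small and $\delta$ chosen afterward for the central block. Your index partition $\{-\tau\le i\le\tau-1\}\cup\{i\ge\tau\text{ or }i\le-\tau-1\}$ is in fact slightly cleaner than the paper's $\{\abs{i}\le\tau\}\cup\{\abs{i}>\tau\}$, since your central intervals $[i,i+1]$ lie exactly in $[-\tau,\tau]$.
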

\begin{proof}
Let $\varepsilon > 0$, choose an integer $\tau \geq 2$ such that $\nor{F}\frac{\tau+2}{2^{\tau-3}} < \varepsilon$, and define $\delta := \frac{1}{3}\left(\varepsilon - \nor{F}\frac{\tau+2}{2^{\tau-3}} \right) > 0$. Take $\gamma_0, \gamma_1 \in \wt M$. If $d(\gamma_0(t),\gamma_1(t)) < \delta$ for every $t \in \left]-\tau, \tau\right[$, then it follows from \cref{lemma:supremum.majoration} that $\sup_{i \leq t < i+1} d(\gamma_0(t), \gamma_1(t)) \leq \delta + 2\nor{F}(1+\abs{i})$, so (using that $\sum_{i=n}^{\infty} \frac{i}{2^i} = \frac{n+1}{2^{n-1}}$) 
	\begin{equation*}
	\begin{split}
	\wt d(\gamma_0, \gamma_1) &\leq \sum_{i \in \Z} \frac{1}{2^{\abs{i}}} \sup_{i \leq t < i+1} d(\gamma_0(t), \gamma_1(t)) \\
		&= \sum_{|i| < \tau-1} \frac{1}{2^{\abs{i}}} \sup_{i \leq t < i+1} d(\gamma_0(t), \gamma_1(t)) + \sum_{|i| \geq \tau} \frac{1}{2^{\abs{i}}} \sup_{i \leq t < i+1} d(\gamma_0(t), \gamma_1(t)) \\
		&\leq \sum_{|i| < \tau-1} \frac{\delta}{2^{\abs{i}}} + \sum_{|i| \geq \tau} \frac{\delta + 2\nor{F}(1+\abs{i})}{2^{\abs{i}}} \\
	    &= \sum_{i \in \Z} \frac{\delta}{2^{\abs{i}}} + 4\nor{F}\sum_{i = \tau}^{\infty} \frac{1+\abs{i}}{2^{\abs{i}}} \\
         &= \sum_{i \in \Z} \frac{\delta}{2^{\abs{i}}} + 4\nor{F}\sum_{i = \tau}^{\infty} \frac{1+\abs{i}}{2^{\abs{i}}} \\
		&= 3\delta + \nor{F}\frac{\tau+2}{2^{\tau-3}} \\
		&= \varepsilon.
		\qedhere
	\end{split}    
	\end{equation*}
\end{proof}

\begin{figure}
    \centering
	\includegraphics{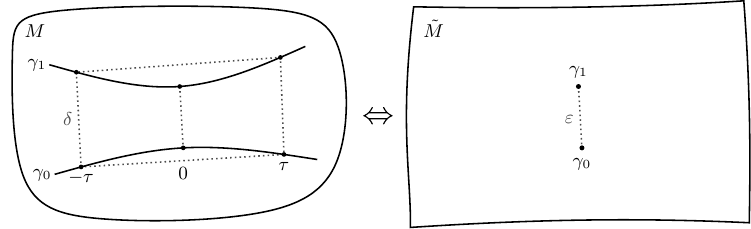}
    \caption{Graphical representation of \cref{lemma:close.orbit.points.imply.close.orbits,lemma:close.orbits.imply.close.orbit.points}.}
    \label{fig:3.4e6}
\end{figure}

To show that if two orbits are close in the orbit space, then the points of the orbits are close for enough time in the phase space, we must first prove the following lemma (see \cref{fig:3.5}).

\begin{lemma}
\label{lemma:approximation.orbits}
Let $M$ be a manifold and $F$ be a bounded Filippov system on $M$.
Let $\delta > 0$, $\alpha > 0$, $\gamma_0, \gamma_1 \in \wt M$ and $t_0 \in \R$. If $d(\gamma_0(t_0), \gamma_1(t_0)) \geq \delta$, then, for every $t \in \left[t_0 - \frac{\alpha}{2\nor{F}}, t_0 + \frac{\alpha}{2\nor{F}} \right]$,
    \begin{equation*}
    d(\gamma_0(t), \gamma_1(t)) \geq \delta - \alpha.
    \end{equation*}
\end{lemma}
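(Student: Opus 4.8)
The plan is to reuse, essentially verbatim, the mean value (finite increments) inequality that already appeared in the proof of Lemma~\ref{lemma:supremum.majoration}, only now applied in the opposite direction: instead of bounding how far apart two trajectories can \emph{become}, we bound how much closer they can get over a short time interval. The key observation is that, since the Filippov field is bounded by $c := \nor{Z}$ (note that $Z_s$ is a convex combination of $Z_+$ and $Z_-$, hence also bounded by $\nor{Z}$), every orbit $\gamma\in\wt M$ moves with speed at most $c$, so $d(\gamma(t),\gamma(s))\le c\,\abs{t-s}$ for all $s,t$ in its domain; moreover the stationary extension $\overline\gamma$ introduced before Lemma~\ref{lemma:supremum.majoration} is still $c$-Lipschitz, being constant on each half-line outside the maximal domain, so the same estimate holds for $\overline\gamma_0$ and $\overline\gamma_1$ (this takes care of the case of finitely defined orbits, in which $\gamma_j(t)$ is read as $\overline\gamma_j(t)$).

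Concretely, I would fix $t$ with $\abs{t-t_0}\le\frac{\alpha}{2c}$ and apply the triangle inequality along the chain of four points $\gamma_0(t_0),\gamma_0(t),\gamma_1(t),\gamma_1(t_0)$:
\[
d(\gamma_0(t_0),\gamma_1(t_0)) \le d(\gamma_0(t_0),\gamma_0(t)) + d(\gamma_0(t),\gamma_1(t)) + d(\gamma_1(t),\gamma_1(t_0)).
\]
Then I would bound the first and third terms on the right-hand side by $c\,\abs{t-t_0}$ using the speed bound, which gives
\[
d(\gamma_0(t),\gamma_1(t)) \ge d(\gamma_0(t_0),\gamma_1(t_0)) - 2c\,\abs{t-t_0} \ge \delta - 2c\,\abs{t-t_0}.
\]
Finally, $\abs{t-t_0}\le\frac{\alpha}{2c}$ forces $2c\,\abs{t-t_0}\le\alpha$, so $d(\gamma_0(t),\gamma_1(t))\ge\delta-\alpha$, which is exactly the claim.

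I do not expect any genuine obstacle in this argument: it is a two-line application of the triangle inequality together with the Lipschitz bound on orbits coming from the boundedness of $Z$. The only point requiring a brief comment is the one already flagged above, namely that when $\gamma_0$ or $\gamma_1$ is only defined on a proper subinterval of $\R$ the statement should be understood for the extended trajectories $\overline\gamma_0,\overline\gamma_1$, and that these extensions retain the $c$-Lipschitz property so that nothing changes in the estimates.
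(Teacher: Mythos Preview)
Your argument is correct and is essentially identical to the paper's own proof: both use the triangle inequality on the chain $\gamma_0(t_0),\gamma_0(t),\gamma_1(t),\gamma_1(t_0)$ and bound the two ``same-orbit'' terms via the Lipschitz estimate $d(\gamma(t),\gamma(t'))\le \nor{Z}\,\abs{t-t'}$ coming from boundedness of $Z$, then rearrange. Your additional remark about the stationary extensions $\overline\gamma_j$ retaining the $c$-Lipschitz property is a sound clarification that the paper leaves implicit.
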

\begin{proof}
Since $F$ is bounded by $\nor{F}$, it follows from the mean value inequality that, for every $\gamma \in \wt M$ and every $t, t' \in \R$,
    \begin{equation*}
    d(\gamma(t), \gamma(t')) \leq \nor{F}\abs{t'-t}.
    \end{equation*}
Then, for every $\gamma_0, \gamma_1 \in \wt M$ and every $t \in \left[t_0 - \frac{\alpha}{2\nor{F}}, t_0 + \frac{\alpha}{2\nor{F}} \right]$,
    \begin{align*}
    \delta &\leq d(\gamma_0(t_0), \gamma_1(t_0)) \\
        &\leq d(\gamma_0(t_0), \gamma_0(t)) + d(\gamma_0(t), \gamma_1(t)) +  d(\gamma_1(t), \gamma_1(t_0)) \\
        &\leq \nor{F}\abs{t-t_0} + d(\gamma_0(t), \gamma_1(t)) + \nor{F}\abs{t_0-t} \\
        &\leq 2\nor{F}\frac{\alpha}{2\nor{F}} + d(\gamma_0(t), \gamma_1(t)),
    \end{align*}
therefore $\delta - \alpha \leq d(\gamma_0(t), \gamma_1(t))$.
\end{proof}

\begin{figure}
    \centering
	\includegraphics{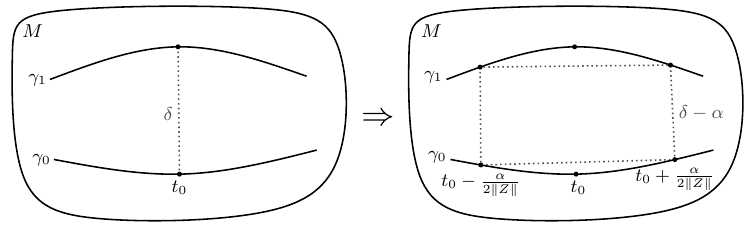}
    \caption{Graphical representation of \cref{lemma:approximation.orbits}.}
    \label{fig:3.5}
\end{figure}

The next result has also appeared in \cite[Prop. 4.2]{ACV}; we state it and also prove it here since we understand that it is important for the full comprehension of the ideas we are presenting.

\begin{lemma}
\label{lemma:close.orbits.imply.close.orbit.points}
Let $M$ be a manifold and $F$ be a bounded Filippov system on $M$.
For every $\tau > 0$ and $\delta > 0$, there exists $\varepsilon > 0$ such that, for every $\gamma_0, \gamma_1 \in \wt M$, if $\wt d(\gamma_0, \gamma_1) < \varepsilon$
then $d(\gamma_0(t), \gamma_1(t)) < \delta$ for every $t \in \left]-\tau, \tau \right[$.
\end{lemma}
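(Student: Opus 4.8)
The plan is to argue by contraposition: I will exhibit a constant $\varepsilon = \varepsilon(\tau,\delta,\nor{Z}) > 0$ with the property that whenever there exists $t_0 \in \left]-\tau,\tau\right[$ with $d(\gamma_0(t_0),\gamma_1(t_0)) \geq \delta$, one necessarily has $\wt d(\gamma_0,\gamma_1) \geq \varepsilon$. Taking this $\varepsilon$ then proves the lemma. As usual, when an orbit is only finitely defined one reads $\gamma_j$ as its extension $\overline\gamma_j$; note that the mean value estimate $d(\overline\gamma_j(t),\overline\gamma_j(t')) \leq \nor{Z}\,\abs{t-t'}$ still holds for the extension, since the appended pieces are stationary, and hence Lemma~\ref{lemma:approximation.orbits} applies verbatim to $\overline\gamma_0,\overline\gamma_1$.

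\emph{Step 1: propagate the separation to a time interval.} Applying Lemma~\ref{lemma:approximation.orbits} with $\alpha = \delta/2$ and $c = \nor{Z}$, from $d(\gamma_0(t_0),\gamma_1(t_0)) \geq \delta$ we obtain
    \begin{equation*}
    d(\gamma_0(t),\gamma_1(t)) \geq \frac{\delta}{2} \qquad \text{for all } t \in J := \left[t_0 - \frac{\delta}{4\nor{Z}},\, t_0 + \frac{\delta}{4\nor{Z}}\right].
    \end{equation*}
\emph{Step 2: locate a good dyadic unit interval.} Let $i_0 := \lfloor t_0 \rfloor$, so that $t_0 \in \left[i_0, i_0+1\right[$; since $t_0 \in \left]-\tau,\tau\right[$ we have $\abs{i_0} \leq \tau+1$. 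A short case analysis on the positions of the endpoints of $J$ relative to $i_0$ and $i_0+1$ (whether $J$ is contained in $\left[i_0,i_0+1\right]$, straddles one endpoint, or contains the whole unit interval) shows that the Lebesgue measure of $\left[i_0,i_0+1\right] \cap J$ is at least $\lambda := \min\!\left(\dfrac{\delta}{4\nor{Z}},\,1\right)$.

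\emph{Step 3: conclude.} Keeping only the $i = i_0$ term in the series defining $\wt d$ and combining the two bounds above,
    \begin{equation*}
    \wt d(\gamma_0,\gamma_1) \geq \frac{1}{2^{\abs{i_0}}} \int_{i_0}^{i_0+1} d(\gamma_0(t),\gamma_1(t)) \d t \geq \frac{1}{2^{\tau+1}} \cdot \frac{\delta}{2} \cdot \lambda =: \varepsilon > 0,
    \end{equation*}
a quantity depending only on $\tau$, $\delta$ and $\nor{Z}$. Thus $\wt d(\gamma_0,\gamma_1) < \varepsilon$ excludes the existence of such a $t_0$, i.e.\ $d(\gamma_0(t),\gamma_1(t)) < \delta$ for every $t \in \left]-\tau,\tau\right[$, which is what we wanted. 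I expect the only mildly delicate point to be Step~2: the bookkeeping that guarantees, uniformly in the location of $t_0$, that $J$ overlaps the unit interval containing $t_0$ in length at least $\lambda$. This is elementary but slightly fussy; everything else is a direct invocation of Lemma~\ref{lemma:approximation.orbits} together with discarding nonnegative terms of the series. (Alternatively one could argue with the finite block $\bigcup_{\abs{i}\le \lceil\tau\rceil}\left[i,i+1\right] \supseteq \left]-\tau,\tau\right[$ instead of isolating the single index $i_0$, but isolating $i_0$ keeps the constant $\varepsilon$ explicit.)
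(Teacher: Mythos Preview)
Your proposal is correct and follows essentially the same route as the paper: argue by contraposition, invoke Lemma~\ref{lemma:approximation.orbits} with $\alpha=\delta/2$ to propagate the separation to an interval of length $\delta/(2\nor{Z})$, and then lower-bound $\wt d$ by the weighted integral over that interval. The only cosmetic difference is that the paper bounds the contribution of the whole interval $J$ using the minimum dyadic weight $1/2^{\lceil \tau+\delta/(4\nor{Z})\rceil}$ across all indices it meets (exactly your parenthetical alternative), whereas you isolate the single index $i_0=\lfloor t_0\rfloor$ and do the overlap bookkeeping of Step~2; both yield an explicit $\varepsilon=\varepsilon(\tau,\delta,\nor{Z})$.
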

\begin{proof}
We will prove this by contradiction. Suppose some $\tau > 0$ and some $\delta > 0$ satisfy that, for every $\varepsilon > 0$, there are $\gamma_0^\varepsilon, \gamma_1^\varepsilon \in \wt M$ and a time $t_\varepsilon \in \left]-\tau, \tau \right[$ such that $\wt d(\gamma_0^\varepsilon, \gamma_1^\varepsilon) < \varepsilon$ and $d(\gamma_0^\varepsilon(t_\varepsilon), \gamma_1^\varepsilon(t_\varepsilon)) \geq \delta$. Taking $\alpha = \frac{\delta}{2}$ in \cref{lemma:approximation.orbits}, and defining $I_\varepsilon := \left[t_\varepsilon - \frac{\delta}{4\nor{F}}, t_\varepsilon + \frac{\delta}{4\nor{F}}\right]$, it follows that, for every $t \in I_\varepsilon$,
    \begin{equation}
    d(\gamma_0^\varepsilon(t), \gamma_1^\varepsilon(t)) \geq \delta - \frac{\delta}{2} = \frac{\delta}{2} > 0.
    \end{equation}
Notice that the size of $I_\varepsilon$ is independent of $\varepsilon$, since $|I_\varepsilon| = \frac{\delta}{2\nor{F}}$, and that $I_\varepsilon \subseteq \left[-\tau-\frac{\delta}{4\nor{F}}, \tau+\frac{\delta}{4\nor{F}} \right]$. Therefore it follows that, for every $\varepsilon > 0$,
    \begin{equation*}
    \varepsilon > \wt d(\gamma_0^\varepsilon, \gamma_1^\varepsilon) > \frac{1}{2^{\lceil\tau+\frac{\delta}{4\nor{F}}\rceil}} \int_{I_\varepsilon} d(\gamma_0^\varepsilon(t), \gamma_1^\varepsilon(t)) \d t \geq \frac{1}{2^{\lceil\tau+\frac{\delta}{4\nor{F}}\rceil}} \delta|I_\varepsilon| = \frac{\delta^2}{2^{\lceil\tau+\frac{\delta}{4\nor{F}}\rceil} 2\nor{F}},
    \end{equation*}
So choosing $\varepsilon \leq \frac{\delta^2}{2^{\lceil\tau+\frac{\delta}{4\nor{F}}\rceil} 2\nor{F}}$ leads to a contradiction.
\end{proof}

The first use we make of \cref{lemma:close.orbit.points.imply.close.orbits,lemma:close.orbits.imply.close.orbit.points} is to prove the continuity of $\wt \Phi$.

\begin{proposition}
\label{prop:orbit_space_flow_is_continuous}
Let $M$ be a manifold and $F$ a bounded Filippov vector field on $M$. The flow $\wt \Phi$ is continuous.
\end{proposition}
\begin{proof}
We are going to assume the domain of the flow $\wt \Phi$ is $\R \times \wt M$ and also assume that the distance function on the product space $\R \times \wt M$ is given by the maximum of the distances on $\R$ and $\wt M$.

Take $(s_0, \gamma_0) \in \R \times \wt M$ and $\varepsilon > 0$. We must find $\delta > 0$ such that, for every $(s,\gamma) \in \R \times \wt M$, if $\abs{s-s_0} < \delta$ and $\wt d(\gamma_0,\gamma)<\delta$, then $\wt d(\wt\Phi^{s_0}(\gamma_0), \wt\Phi^{s}(\gamma)) < \varepsilon$.

First notice that, since $F$ is bounded by $\nor{F}$, it follows from the mean value inequality that, for every $t \in \R$,
	\begin{equation*}
	\begin{split}
	d(\gamma_0(t+s_0), \gamma(t+s)) &\leq d(\gamma_0(t+s_0), \gamma(t+s_0)) + d(\gamma(t+s_0), \gamma(t+s)) \\
		&\leq d(\gamma_0(t+s_0), \gamma(t+s_0)) + \nor{F}\abs{s-s_0}.
	\end{split}
	\end{equation*}
Then, for every $i \in \Z$,
    \begin{equation*}
    \sup_{i \leq t < i+1} d(\gamma_0(t+s_0), \gamma(t+s)) \leq \sup_{i \leq t < i+1}  d(\gamma_0(t+s_0), \gamma(t+s_0)) + \nor{F}\abs{s-s_0},
    \end{equation*}
so, by summing over all integers $i \in \Z$ with the corresponding weights $2^{-|i|}$,

    \begin{equation}
    \label{eq:continuity.flow}
    \wt d(\wt\Phi^{s_0}(\gamma_0), \wt\Phi^{s}(\gamma)) = \wt d(\wt\Phi^{s_0}(\gamma_0), \wt\Phi^{s_0}(\gamma)) + 3\nor{F}\abs{s-s_0}.
    \end{equation}

Now notice that, for every $\varepsilon' > 0$, there is a $\delta' > 0$ such that, if $\wt d(\gamma_0, \gamma) < \delta'$, then $\wt d(\wt\Phi^{s_0}(\gamma_0), \wt\Phi^{s_0}(\gamma)) < \varepsilon'$. This is the case since, from \cref{lemma:close.orbit.points.imply.close.orbits}, there are $\tau_0 > 0$ and $\delta_0 > 0$ such that, if $d(\wt \Phi^{s_0}(\gamma_0)(t), \wt \Phi^{s_0}(\gamma)(t)) < \delta_0$ for every $t \in \left[-\tau_0, \tau_0 \right]$, then $\wt d(\wt\Phi^{s_0}(\gamma_0), \wt\Phi^{s_0}(\gamma)) < \varepsilon'$. But this hypothesis is equivalent to having $d(\gamma_0(t), \gamma(t)) < \delta_0$ for every $t \in \left[-\tau_0+s_0, \tau_0+s_0 \right]$. So by choosing some $\tau_1 > 0$ such that $\left[-\tau_0+s_0, \tau_0+s_0 \right] \subseteq \left[-\tau_1, \tau_1 \right]$, it follows from \cref{lemma:close.orbits.imply.close.orbit.points} that there is a $\delta' > 0$ such that, if $\wt d(\gamma_0, \gamma) < \delta'$, then $d(\gamma_0(t), \gamma(t)) < \delta_0$ for every $t \in \left[-\tau_1, \tau_1 \right]$.

Taking $\varepsilon' := \frac{\varepsilon}{2}$, using the respective $\delta'$ of the last paragraph, and defining $\delta := \min\{\delta', \frac{\varepsilon}{6\nor{F}}\}$, we conclude from \cref{eq:continuity.flow} that, if $\abs{s-s_0} < \delta$ and $\wt d(\gamma_0,\gamma)<\delta$, then
    \begin{equation*}
    \wt d(\wt\Phi^{s_0}(\gamma_0), \wt\Phi^{s}(\gamma)) < \frac{\varepsilon}{2} + 3\nor{F}\frac{\varepsilon}{6\nor{F}} = \varepsilon.
    \qedhere
    \end{equation*}
\end{proof}

\begin{proof}[Proof of {\cref{theo:orbit_space_is_metric_and_flow_is_continuous}}]
This is just \cref{prop:orbit_space_is_metric,prop:orbit_space_flow_is_continuous}.
\end{proof}

\subsection{Topology on the orbit space}
\label{ssec:topology_orbit_space}

We now study the topological structure of the orbit space. Our manifold $M$ is a metric space that is separable (has a countable dense subset), complete and perfect (has no isolated points). We show that the orbit space $\wt M$ inherits separability (\cref{prop:orbit.space.is.separable}) and being perfect (\cref{prop:orbit.space.isolated.points}), but completeness implies only that the orbit space is Baire (\cref{prop:orbit.space.is.Baire}).

\begin{proposition}
\label{prop:orbit.space.is.separable}
Let $M$ be a manifold and $F$ be a bounded Filippov system on $M$.
If $\Tange$ is finite and each tangency point has finite multiplicity, then
the orbit space $\wt M$ is separable.
\end{proposition}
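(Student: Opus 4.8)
The plan is to build an explicit countable dense subset of $\wt M$ by combining a countable dense subset of the phase space $M$ with the countability of ``combinatorial data'' of orbits, and then invoking the continuity estimates already established (Lemmas~\ref{lemma:close.orbit.points.imply.close.orbits} and \ref{lemma:close.orbits.imply.close.orbit.points}) to transfer density from $M$ to $\wt M$. First I would fix a countable dense set $D \subseteq M$, which exists since $M$ is a separable Riemannian manifold. The key structural observation is that, because $\Tange$ is finite and every tangency point has finite multiplicity, any orbit $\gamma \in \wt M$ is, on each compact time interval, a concatenation of \emph{finitely many} smooth pieces, each of which is an integral arc of $Z_+$, $Z_-$, or $Z_s$, and the switching times occur at points of $\Switch$. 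Moreover, the places where a genuine choice is made (entering a sliding region, leaving it through a tangency, or passing through $\Tange$) are confined to the finite set $\Tange$ together with the boundaries of the sliding regions; away from these, the orbit is uniquely determined forward and backward by any one of its points. This means an orbit is determined, up to a time shift, by a finite amount of discrete data (which tangency points it visits and in what order, and whether it slides or not at each) plus a finite list of real ``sojourn times'' between successive visits to $\Switch$.

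Next I would make this precise by parametrizing orbits. For each $n \in \N$, consider orbits that, on the interval $[-n,n]$, cross or touch $\Switch$ at most $N(n)$ times for a bound $N(n)$ coming from the finiteness of $\Tange$ and finite multiplicity (the number of smooth arcs on a compact interval is locally finite). Such an orbit is encoded by: a finite word in the alphabet $\{+,-,s\}$ recording the type of each smooth piece (countably many such words); for each transition, a choice of tangency point in the finite set $\Tange$ or a generic crossing point; and the finitely many times at which transitions occur. I would then approximate: replace the initial point $\gamma(0)$ (or, if $0$ lies on a sliding arc, the relevant entry point) by a nearby point of $D$, and replace the transition times by nearby rationals. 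By continuous dependence of the flows $\Phi_+^t$, $\Phi_-^t$, $\Phi_s^t$ on initial conditions and time, the resulting concatenation is a genuine orbit $\gamma' \in \wt M$ with $d(\gamma(t), \gamma'(t)) < \delta$ for all $t \in [-\tau,\tau]$, for any prescribed $\delta, \tau$. Lemma~\ref{lemma:close.orbit.points.imply.close.orbits} then gives $\wt d(\gamma, \gamma') < \varepsilon$. The collection of all orbits obtained this way — rational transition times, endpoints in $D$, finitely many combinatorial types per scale — is countable, and the argument shows it is dense, so $\wt M$ is separable.

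The main obstacle, and the place that needs the most care, is the rigorous justification that the ``combinatorial type + transition times + one point'' data really does determine an orbit, and that nearby data yields a nearby orbit \emph{uniformly on $[-\tau,\tau]$}. The subtlety is at sliding: an orbit may enter $\Slide$, flow with $Z_s$, and then leave through a tangency point, and the exit is not determined by the entry point alone in a way that is continuous — small perturbations of the entry point can change when (or whether) the sliding arc reaches the tangency boundary. I would handle this by exploiting finite multiplicity: near a tangency point of finite multiplicity only finitely many regular arcs arrive or depart, and $Z_s$ (or its extension to $\overline{\Slide}$) reaches $\Tange$ in a controlled way, so after possibly refining the combinatorial data one obtains the needed continuity on compact time windows. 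A second minor point is the extension convention for finitely-defined orbits (those reaching $\partial M$ or a pseudo-equilibrium): since $\wt d$ is computed via the stationary extension $\overline\gamma$, approximating such an orbit reduces to approximating the finite piece plus the terminal point, which again lies in (or can be pushed into) $D$. Assembling these, the countable family is dense and separability follows.
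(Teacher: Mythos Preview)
Your overall strategy matches the paper's: fix a countable dense set in $M$, encode orbits on a bounded time window by finitely many discrete choices plus continuous data drawn from that dense set, and then invoke Lemma~\ref{lemma:close.orbit.points.imply.close.orbits}. The paper organizes the encoding slightly differently: instead of recording a word in $\{+,-,s\}$ together with rational ``transition times,'' it records only the points $a_i$ at which the orbit \emph{leaves} $\Slide[u]\cup\Tange$ (forward) or $\Slide[s]\cup\Tange$ (backward), together with signs $\sigma_i\in\{+,-\}$ indicating the side of departure. This is cleaner because those exit points are exactly the free parameters of a Filippov orbit; once $\gamma(0)$ and the $(a_i,\sigma_i)$ are fixed, the orbit (and hence all crossing times) is determined. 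Your proposal to perturb transition times to rationals is over-parametrized: the crossing times are not independent of the initial point, so one cannot freely adjust both and still obtain a genuine orbit. Parametrizing by exit points avoids this.

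The substantive gap is the obstacle you flag but do not resolve: what happens when the approximating orbit is supposed to hit a tangency point. If $\gamma$ exits sliding exactly at some $p\in\Tange$, a generic perturbation of the earlier data will miss $p$ entirely and produce a qualitatively different continuation, so continuous dependence fails there. The paper's fix is concrete and is the piece your sketch lacks: before doing anything else, it augments the countable dense set by a set $E^{\mathrm{tan}}$ consisting of countably many points along the (at most four, by finite multiplicity) regular arcs emanating from each $p\in\Tange$. Then, whenever the target orbit's next landing in $\Slide[s]\cup\Tange$ is a tangency point, the approximating data is chosen from $E^{\mathrm{tan}}_p$, guaranteeing that the approximant lands \emph{exactly} at $p$ rather than nearby. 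Flowing this augmented set forward/backward to $\Slide$ produces the countable dense subsets $E^{ss}\subset\Slide[s]\cup\Tange$ and $E^{us}\subset\Slide[u]\cup\Tange$ from which the $a_i$ are drawn. With this ingredient in place your argument goes through; without it, the approximation step at tangencies is not justified.
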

\begin{proof}
The proof is divided in $5$ steps.
\paragraph{\bfseries Step 1.} (Augmenting the countable dense subset) The first step of the proof is to augment a countable dense subset of $M$ to work better with respect to tangencies. For each tangency point $p \in \Tange$ (see \cref{fig:3.8-1}), there are (up to) four orbit segments (this is a consequence of the finite multiplicity of the tangency points) starting at $p$: orbit segment $\gamma_+^+\colon \left[0, t_+^+ \right] \to M$ goes forwards from $p$, leaving $\Slide[u] \cup \Tange$ at $\gamma_+^+(0) = p$ to the region $M_+$, and stops the first time the orbit reaches $\Slide[s] \cup \Tange$ after that (if it does not, we take $t_+^+ = \infty$ and consider the interval $\left[0,+\infty \right[$ as the domain); orbit segment  $\gamma_+^-\colon \left[0, t_+^- \right] \to M$ is defined analogously, but leaves at $p$ to region $M_-$; orbit segment $\gamma_-^+\colon \left[-t_-^+, 0\right] \to M$ goes backwards from $p$, leaving $\Slide[s] \cup \Tange$ at $\gamma_-^+(0) = p$ to region $M_+$, and stops the first time it reaches $\Slide[u] \cup \Tange$ after that; and finally orbit segment $\gamma_-^-\colon \left[-t_-^-, 0\right] \to M$ is defined analogously to $\gamma_-^+$, but leaving $p$ to region $M_-$. The image of each of these orbit segments is in a regular region, apart possibly from their endpoints, so they are homeomorphic to a real interval and thus we can take a countable dense subset of them. The union of all these sets is also a countable set, which we denote as $E^\mathrm{t}_p$, and finally the set $E^\mathrm{t} := \bigcup_{p \in \Tange} E^\mathrm{t}_p$ is also a countable set, since $\Tange$ is finite. Finally, let $E_0$ be a countable dense subset of $M$ and define $E_1:= E_0 \cup E^\mathrm{t}$.

\begin{figure}[H]
    \centering
	\includegraphics{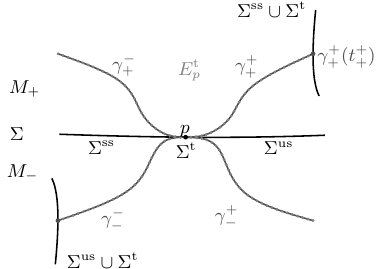}
    \caption{
    A dense set of points $E^\mathrm{t}$ is taken on the orbit segments arriving and leaving each tangency point $p$.}
    \label{fig:3.8-1}
\end{figure}

\paragraph{\bfseries Step 2.}(Defining a dense subset of $\Slide \cup \Tange$) For each $p \in E_1$, we define points $p^\mathrm{ss} \in \Slide[s] \cup \Tange$ and $p^\mathrm{us} \in \Slide[u] \cup \Tange$ as follows (see \cref{fig:3.8-2}). We consider an orbit $\gamma$ of $F$ such that $\gamma(0)=p$ and take $p^\mathrm{ss} := \gamma(t^\mathrm{ss})$, where $t^\mathrm{ss} \geq 0$ is the smallest positive time $t$ such that $\gamma(t) \in \Slide[s] \cup \Tange$ (that is, the first point in the orbit that reaches $\Slide[s] \cup \Tange$ going forward); if the orbit never enters $\Slide[s] \cup \Tange$, this point is left undefined. This is not dependent on the choice of $\gamma$ since, before entering $\Slide[s] \cup \Tange$ for the first time, the orbit of $p$ is unique, hence all such orbits coincide. Likewise, we take $p^\mathrm{us} := \gamma(-t^\mathrm{us})$, where $t^\mathrm{us} \geq 0$ is the smallest positive time $t$ such that $\gamma(-t) \in \Slide[u] \cup \Tange$ (that is, the first point in the orbit that reaches $\Slide[u] \cup \Tange$ going backward); if the orbit never reaches $\Slide[u] \cup \Tange$, this point is left undefined. We define sets $E^\mathrm{ss} := \set{p^\mathrm{ss} \st p \in E_1}$ and $E^\mathrm{us} := \set{p^\mathrm{us} \st p \in E_1}$. As a consequence of the definition of $E^\mathrm{t}$, all tangency points belong to these sets.
    
Notice that the set $E^\mathrm{ss}$ is dense in $\Slide[s] \cup \Tange$ and the set $E^\mathrm{us}$ is dense in $\Slide[u] \cup \Tange$. If it were otherwise, there would be a neighbourhood of a point $p \in \Slide[s]$ (respectively $\Slide[u]$) without any points of $E^\mathrm{ss}$ (respectively $E^\mathrm{us}$), which could be translated backwards (resp. forwards) along the orbits to create a tubular neighborhood of a point of $M$ without any point of $E_1$, which would contradict the fact that $E_1$ is dense in $M$. Finally, define $E := E_1 \cup E^\mathrm{ss} \cup E^\mathrm{us}$.

\begin{figure}[H]
    \centering
	\includegraphics{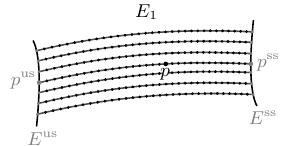}
    \caption{
    The sets of points $E^\mathrm{ss}$ and $E^\mathrm{us}$ (dense on $\Slide[s]$ and $\Slide[u]$, respectively) are taken by flowing the points $p \in E_1$ along their orbits until they reach the sliding region at the points $p^\mathrm{ss}$ and $p^\mathrm{us}$.}
    \label{fig:3.8-2}
\end{figure}

\paragraph{\bfseries Step 3.}
(Classifying orbit behavior through $\Slide \cup \Tange$) For each orbit $\gamma \in \wt M$ and each $\tau \geq 0$, we will define a finite sequence associated with $\gamma$ that describes how it travels through $\Switch$ in the time interval $\left[-\tau, \tau\right]$. Going forward (see \cref{fig:3.8-3.1}), in the time interval $\left[-\tau, \tau\right]$, $\gamma$ may enter and leave $\Switch$ only a finite number of times. Let $k^+ \in \N \cup \{0\}$ be the number of times it leaves $\Slide[u] \cup \Tange$. For each $i \in \{1, \ldots, k^+\}$, let $a_i \in \Slide[u] \cup \Tange$ be the point through which $\gamma$ leaves $\Slide[u] \cup \Tange$ for the $i$-th time. When the orbit leaves $\Slide[u] \cup \Tange$, it must go to either of the $2$ regions $M_+$ or $M_-$; let $\sigma_i := +1$ in the first case and $\sigma_i := -1$ in the second. In this way we define a finite sequence with $k^+$ entries:
    \begin{equation*}
    (a_1,\sigma_1), \ldots, (a_{k^+},\sigma_{k^+}).
    \end{equation*}
    In the case $k^+ = 0$ we just take the empty sequence to represent the orbits travel through $\Switch$.
    
Analogously, we construct a sequence for the orbit going backwards (see \cref{fig:3.8-3.2}), now taking $k^- \in \N$ to be number of times it enters $\Slide[s] \cup \Tange$ in the negative time interval $\left[-\tau, 0\right]$, $a_{-i} \in \Slide[s] \cup \Tange$ to be the point this happens at for the $i$-th time going backwards, and $\sigma_{-i} \in \{+1, -1\}$ to represent the region $M_+$ or $M_-$ it came from. This gives a finite sequence with $k^-$ entries:
    \begin{equation*}
    (a_{-k^-},\sigma_{-k^-}), \ldots, (a_{-1},\sigma_{-1}).
    \end{equation*}
The whole sequence is going to be called the $\Switch$-sequence of $\gamma$ in $\left[-\tau, \tau\right]$.

\begin{figure}[H]
    \centering
	\includegraphics{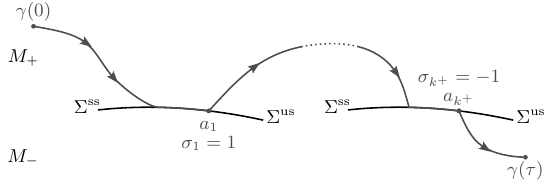}
    \caption{
    The $\Switch$-sequence of an orbit segment going forwards is defined by taking the points $a_1, \ldots, a_{k^+}$ where the orbit leaves $\Slide[u]$ on positive time. The values $\sigma_i$ give the side they left to.}
    \label{fig:3.8-3.1}
\end{figure}

\begin{figure}[H]
    \centering
	\includegraphics{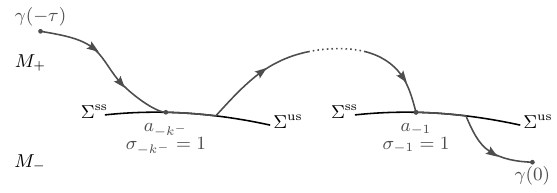}
    \caption{
    The $\Switch$-sequence of an orbit segment going backwards is defined by taking the points $a_{-k^-}, \ldots, a_{-1}$ where the orbit enters $\Slide[s]$ on negative time. The values $\sigma_i$ give the side they entered from.}
    \label{fig:3.8-3.2}
\end{figure}

\paragraph{\bfseries Step 4.} (Constructing the countable dense subset of orbits) For each $p \in E$, we will construct a countable set of orbits in $\wt M$ (see \cref{fig:3.8-4}). The union of theses orbits for all $p \in E$ will be our countable set. Let $p \in E$ and, for each $n \in \N$, define the subset $\Gamma_p^n \subseteq \wt M$ of all orbits $\gamma$ such that $\gamma(0)=p$ and, in the time interval $\left[-n, n\right]$, their $\Switch$-sequence satisfies $a_{k^-}, \ldots, a_{-1} \in E^\mathrm{ss}$ and $a_1, \ldots, a_{k^+} \in E^\mathrm{us}$. We define an equivalence relation in $\Gamma_p^n$ by determining that $2$ orbits are equivalent if their respective $\Switch$-sequences are equal in $\left[-n, n\right]$, and take $\wt E_p^n \subseteq \Gamma_p^n$ to be a set of orbits with one representative of each equivalence class. This set is countable, since in the finite interval $\left[-n, n\right]$ there is a maximum for all possible $k^+$, a minimum for all $k^-$, the $a_i$ belong to the countable set $E^\mathrm{ss} \cup E^\mathrm{us}$, and the $\sigma_i$ belong to the finite set $\{+1,-1\}$. We define
    \begin{equation*}
    \wt E := \bigcup_{p \in E} \bigcup_{n \in \N} \wt E_p^n.
    \end{equation*}
This is going to be our countable set dense on $\wt M$.

\begin{figure}[H]
    \centering
	\includegraphics{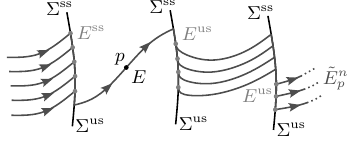}
    \caption{
    A countable set $\wt E_p^n$ of orbits is chosen, each orbit starting at a point $p \in E$ and, for a set time interval $\left[-n, n\right]$, entering $\Slide[s]$ through points of $E^\mathrm{ss}$ on negative time and leaving $\Slide[u]$ through points of $E^\mathrm{us}$ on positive time. The set is chosen such that all possible sequences of points of $E^\mathrm{ss}$ and $E^\mathrm{us}$ are taken into account.}
    \label{fig:3.8-4}
\end{figure}

\paragraph{\bfseries Step 5.} (Approximating orbits) Let $\gamma \in \wt M$ and $\varepsilon > 0$. We must find an orbit $\alpha \in \wt E$ such that $\wt d(\gamma, \alpha) < \varepsilon$ (see \cref{fig:3.8-5}). For this $\varepsilon$, we take $\delta > 0$ and $\tau > 0$ as in \cref{lemma:close.orbit.points.imply.close.orbits}. Let $a_0 := \gamma(0)$ and
    \begin{equation*}
    (a_{-k^-},\sigma_{-k^-}), \ldots, (a_{-1},\sigma_{-1}), (a_1,\sigma_1), \ldots, (a_{k^+},\sigma_{k^+}) 
    \end{equation*}
be the $\Switch$-sequence of $\gamma$ in the interval $\left[-\tau, \tau \right]$. Define $n = \lfloor \tau \rfloor$. For each $i \in \{-k^-,\ldots, k^+\}$, we will choose points $p_i \in M$ and numbers $\delta_i > 0$ as follows. If $a_0 \in \Slide \cup \Tange$, we choose $p_0 \in E^\mathrm{ss} \cup E^\mathrm{us}$; if $a_0 \in M \setminus (\Slide \cup \Tange)$ and the first time forwards that $\gamma$ reaches $\Slide[s] \cup \Tange$ is at a tangency point, or if the first time backwards that it reaches $\Slide[u] \cup \Tange$ is at a tangency point, we take $p_0 \in E^\mathrm{t}_p$, otherwise we take $p_0 \in E$. In all cases we choose $p_0$ really close to $a_0$, such that $d(a_0, p_0) < \delta_0$. Now, for each $i \in \{1, \ldots, k^+\}$, we choose $p_i \in E^\mathrm{us}$ really close to $a_i$, such that $d(a_i, p_i) < \delta_i$, considering that, if the first time $\gamma$ reaches $\Slide[s] \cup \Tange$ after leaving through $a_i$ is at a tangency point, we must take $p_i = a_i$. The same procedure must be carried on for the negative part of the orbit. Since $p_0 \in E$, $p_i \in E^\mathrm{us}$ for positive $i$ and $p_i \in E^\mathrm{ss}$ for negative $i$, there is a representative $\alpha \in \wt E_p^n$. Choosing all $\delta_i$ small enough, we can guarantee that, for all $t \in \left[-\tau, \tau\right]$, we have $d(\gamma(t), \alpha(t)) < \delta$, so it follows by \cref{lemma:close.orbit.points.imply.close.orbits} that $\wt d(\gamma, \alpha) < \varepsilon$.

\begin{figure}[H]
    \centering
	\includegraphics{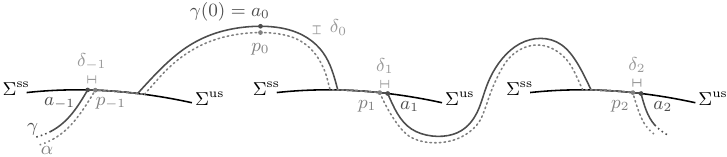}
    \caption{
    For a given orbit $\gamma$, we chose an orbit $\alpha \in \wt E$ that is close to it each time it enters $\Slide[s]$ on negative time, and leaves $\Slide[u]$ on positive time.
    }
    \label{fig:3.8-5}
\end{figure}

\end{proof}

We now proceed to prove that the orbit space $\wt M$ is a Baire space (\cref{prop:orbit.space.is.Baire}). We will need two lemmas (\cref{lem:uniform_tangencies,lem:complete_on_local_ball}).
Notice that, as a consequence of \cref{lemma:close.orbits.imply.close.orbit.points}, if $(\xi_n)_{n \in \N}$ is a Cauchy sequence in the orbit space $\wt M$ then, for each $t \in \R$, the sequence $(\xi_n(t))_{n \in \N}$ is a Cauchy sequence in the phase space $M$, so, since $M$ is complete, there exists a limit point $\xi_\infty(t) \in M$. Nonetheless, the function $\xi_\infty\colon \R \to M$ may fail to be a Filippov solution of $F$.

To see this, consider the example of the bean model (\cite{BCE}, \cref{fig:bean_model}). We may take a sequence of solutions of the bean model system that spin around the central tangent point $p$ with an increasingly higher frequency by concatenating loops that are closer and closer to $p$. Since every point in the next orbit gets closer to $p$, this sequence converges to a constant function $\xi_\infty$ that is stationary at $p$, but this in fact is not a solution of $F$, which means $\wt M$ is not always complete (see \cref{fig:bean_model_not_complete}).

\begin{figure}[H]
	\centering
	\includegraphics{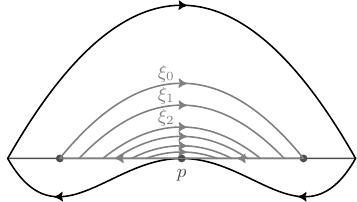}
	\caption[The orbit space of the bean model is not complete]{The orbit space of the bean model is not complete, since we can take a sequence of orbits $(\xi_n)_{n \in \N}$ that go around the central tangency point $p$ with increasing frequency. This sequence then converges to a stationary orbit at the tangency point $p$, which is not a Filippov orbit.}
	\label{fig:bean_model_not_complete}
\end{figure}

The problem in this example is that the distance between two times at which an orbit point intersects $\Tange$ converges to $0$. This is essentially what prevents completeness of $\wt M$. \Cref{lem:uniform_tangencies} serves the purpose of showing that we can control the distance between tangencies if we restrict the time interval of our solutions to a compact interval $[-n,n]$. \Cref{lem:complete_on_local_ball} shows that, in this domain $[-n,n]$, the restricted function $\xi_\infty|_{[-n,n]}\colon [-n,n] \to M$ (which is the pointwise limit of the Cauchy sequence $(\xi_n|_{[-n,n]})_{n \in \N}$) is a Filippov solution of $F$. This is a kind of restricted completeness which we will use to prove that $\wt M$ is Baire.

\begin{lemma}
\label{lem:uniform_tangencies}
Let $M$ be a manifold (possibly with boundaries) and $F$ be a bounded Filippov system on $M$ with $\Tange$ finite.
For every $\gamma \in \wt M$ and $n \in \N$, there exist positive real numbers $r$ and $\alpha$ such that, given $\xi \in \wtbola{\gamma}{r}$ and distinct $t, t' \in [-n,n]$, if $\xi(t), \xi(t') \in \Tange$, then $\abs{t' - t} > \alpha$. 
This is also true for every $0 < r' \leq r$.
\end{lemma}
\begin{proof}
In the compact interval $[-n,n]$, the orbit $\gamma$ goes through $\Tange$ a finite number of times, because $\Tange$ is finite. Let
	\begin{equation*}
	\beta := \inf\set{\abs{t' - t} \st t,t' \in [-n,n], t \neq t' \text{ and } \gamma(t), \gamma(t') \in \Tange}.
	\end{equation*}

From \cref{lemma:close.orbits.imply.close.orbit.points} (taking $\tau=n$ and $\delta$ small enough) it then follows that there exists $r>0$ such that, for every $\xi \in \wtbola{\gamma}{r}$, the point $\xi(t)$ is sufficiently close to $\gamma(t)$ for $t \in [-n,n]$, so by continuity of $F$ in the regular parts and because $\Tange$ is finite, we can guarantee that there is an $\alpha>0$ sufficiently close to $\beta$ such that, given $\xi \in \wtbola{\gamma}{r}$ and distinct $t, t' \in [-n,n]$, if $\xi(t), \xi(t') \in \Tange$, then $\abs{t' - t} > \alpha$.
This is also true for any positive $r' \leq r$ since $\wtbola{\gamma}{r'} \subseteq \wtbola{\gamma}{r}$.
\end{proof}

\begin{lemma}
\label{lem:complete_on_local_ball}
Let $M$ be a manifold (possibly with boundaries) and $F$ be a bounded Filippov system on $M$ with $\Tange$ finite.
For every $\gamma \in \wt M$ and $n \in \N$, let $r > 0$ be as in 
\cref{lem:uniform_tangencies}.
Let $(\xi_i)_{i \in \N}$ be a Cauchy sequence in $\wtbola{\gamma}{r}$ and $\xi_\infty(t)$ the (pointwise) limit of $(\xi_i(t))_{i \in \N}$ for $t \in [-n,n]$.
Then $\xi_\infty|_{[-n,n]}$ is a Filippov solution of $F$.
\end{lemma}
\begin{proof}
Let $\alpha$ be as in \cref{lem:uniform_tangencies} for $\gamma \in \wt M$ and $n \in \N$. We must show that the pointwise limit $\xi_\infty|_{[-n,n]}$ of $(\xi_i|_{[-n,n]})_{i \in \N}$ is a Filippov orbit.

First, take $t_0 \in [-n,n]$. If $\xi_\infty(t_0) \in M \setminus (\Slide \cup \Tange)$, the behavior of $\xi_\infty$ around $\xi_\infty(t_0)$ is the same as that of points in a regular system, so in this case $\xi_\infty$ is an orbit of the system in the neighborhood of $t_0$. We must study the behavior of $\xi_\infty$ when $\xi_\infty(t_0)$ belongs to $\Slide[s]$, $\Slide[u]$ or $\Tange$.

Let us first consider $\xi_\infty(t_0) \in \Slide[s]$. Take a tubular neighborhood $V \subseteq M$ of $\xi_\infty(t_0)$ small enough such that every point of $V$ flows to $\Slide[s]$ (this neighborhood exists because $\Slide[s]$ is open in $\Switch$). Since $\xi_i(t_0) \to \xi_\infty(t_0)$ as $i \to \infty$, there is a natural number $i_0 \in \N$ such that, for all $i \geq i_0$, $\xi_i(t_0) \in V$. For each of these $i$, take $\tau_i$ to be the smallest time $t \geq 0$ such that $\xi_i(t_0 + t) \in \Slide[s]$. Since $\xi_i(t_0) \to \xi_\infty(t_0) \in \Slide[s]$, we must have $\tau_i \to 0$ as $i \to \infty$. Now, since $\xi_i(t_0 + \tau_i)$ is on $\Slide[s]$, its flow is given by the slide flow $\Phi_{\mathrm s}$, so there is an open interval neighborhood $I = \left]0, d\right[$ such that, for all $t \in \left[0, d\right[$,
	\begin{equation*}
	\xi_i(t_0 + \tau_i + t) = \Phi_{\mathrm s}^t(\xi_i(t_0 + \tau_i)).
	\end{equation*}
Taking the limit as $i \to \infty$, we conclude that, for all $t \in \left[0, d\right[$,
	\begin{equation*}
	\xi_\infty(t_0+t) = \Phi_{\mathrm s}^t(\xi_\infty(t_0)).
	\end{equation*}
For negative $t$, there are $2$ cases. (1) If there is an open interval $I = \left]a, 0\right[$ of $0$ such that, for all $t \in I$, $\xi_\infty(t_0+t) \in \Slide[s]$; now take some $b \in \left]a, 0\right[$ and apply the previous proof for $\xi_\infty (t_0+b)$, which implies that $\xi_\infty(t_0 + t) = \Phi_{\mathrm s}^t(\xi_\infty(t_0))$ for all $t \in \left[b, 0\right[$. (2) In the other case, there is an open interval $I = \left]a, 0\right[$ such that, for all $t \in \left]a, 0\right[$, the limit points $\xi_\infty(t_0+t)$ belong to the regular region $M \setminus \Switch$ of the system, hence
	\begin{equation*}
	\xi_\infty(t_0+t) = \Phi_{\pm}^t(\xi_\infty(t_0)),
	\end{equation*}
where the $\pm$ sign indicates whether the flow is in $M_+$ or $M_-$. This proves that, in both cases, $\xi_\infty$ is an orbit around $\xi_\infty(t_0)$. The behavior around $\xi_\infty(t_0) \in \Slide[u]$ is the same as the previous case in $\Slide[s]$, just with the direction of the orbits inverted.

Finally, consider $\xi_\infty(t_0) \in \Tange$. Notice that $\Tange$ is finite, so the tangency points are isolated. Besides that, the $\alpha$ from \cref{lem:uniform_tangencies} guarantees that tangencies are all distanced at least $\alpha$ apart in time. So if we consider $t \in [t_0 - {\alpha}/{4}, t_0 + {\alpha}/{4}]$, it follows from \cref{lem:uniform_tangencies} that, for every $n \in \N$, each curve  $\xi_n|_{[t_0 - {\alpha}/{4}, t_0 + {\alpha}/{4}]}$ has at most $1$ tangency point. Since $\xi_\infty$ is an orbit around all points other than the tangency points, it follows that in a small enough neighborhood of $\xi_\infty(t_0)$ all points of $\xi_\infty$ are orbit points, so by continuity $\xi_\infty(t_0)$ is also an orbit point.
\end{proof}

\begin{proposition}
\label{prop:orbit.space.is.Baire}
Let $M$ be a manifold (possibly with boundaries) and $F$ be a bounded Filippov system on $M$ with $\Tange$ finite.
Then the orbit space $\wt M$ is a Baire space.
\end{proposition}
\begin{proof}
Let $(A_k)_{k \in \N}$ be a family of open dense sets in $\wt M$ and denote $D := \bigcap_{k \in \N} A_k$. We will show that $D$ is dense. Let $U \subseteq \wt M$ be an open set. We must show that $D \cap U \neq \emptyset$. Since $U$ is open and $A_0$ is open and dense, there exist an orbit $\gamma_0 \in U$ and a real number $0 < r_0$ such that $\wtbola{\gamma_0}{r_0} \subseteq U \cap A_0$.

We now construct inductively a sequence of orbits $(\gamma_n)_{n \in \N}$ and strictly positive real numbers $(r_n)_{n \in \N}$ such that
$\wt d(\gamma_n, \gamma_{n+1}) < \frac{r_n}{3}$, $\gamma_n \in A_n$, $\wtbola{\gamma_n}{r_n} \subseteq A_n$, and $r_n$ is a valid radius for the orbit $\gamma_n$ and integer $n$ in \cref{lem:complete_on_local_ball}. Let $n \in \N$ and assume the constructions works for integers less then or equal to $n$. Since $\wtbola{\gamma_n}{\frac{r_n}{3}}$ is open and $A_{n+1}$ is open and dense, there exists $\gamma_{n+1} \in \wtbola{\gamma_n}{\frac{r_n}{3}} \cap A_{n+1}$. Now choose a real number $r_{n+1} > 0$ small enough such that $r_{n+1} \leq \frac{r_n}{3}$, $\wtbola{\gamma_{n+1}}{r_{n+1}} \subseteq A_{n+1}$, and \cref{lem:complete_on_local_ball} holds for $\gamma_{n+1}$ and $n+1$.

The sequence of radii $(r_n)_{n \in \N}$ has the following properties. Induction on the relation $r_{n+1} \leq \frac{r_n}{3}$ implies that $r_{n+k} \leq \frac{r_n}{3^k}$. This shows that $(r_n)_{n \in \N}$ is strictly decreasing and $\lim_{n \to \infty} r_n = 0$.

We first show that the sequence $(\gamma_n)_{n \in \N}$ is a Cauchy sequence. To see this, notice that, for every $n \in \N$, we have $\wt d(\gamma_n, \gamma_{n+1}) < \frac{r_n}{3}$, so, for every $m \in \N$,
	\begin{equation*}
	\wt d(\gamma_n, \gamma_{n+m}) \leq \sum_{k=0}^{m-1} \wt d(\gamma_{n+k}, \gamma_{n+k+1}) < \sum_{k=0}^{m-1} \frac{r_{n+k}}{3} \leq \sum_{k=0}^{m-1} \frac{r_n}{3^{k+1}} < \frac{r_n}{2} < r_n.
	\end{equation*}
This shows that, for every $n,m \in \N$, $\gamma_{n+m} \in \wtbola{\gamma_n}{r_n}$. Now, given $\varepsilon > 0$, choose $N \in \N$ such that $r_N < \varepsilon$ (which is possible because $\lim_{n \to \infty} r_n = 0$) and it follows that, for every $m \geq n > N$, $\wt d(\gamma_n, \gamma_m) < r_n < r_N < \varepsilon$.
This shows that $(\gamma_n)_{n \in \N}$ is in fact a Cauchy sequence, so there exists a pointwise limit $\gamma_\infty$ of the sequence $(\gamma_n)_{n \in \N}$.

Now we show the pointwise limit $\gamma_\infty$ is a Filippov orbit. Fix $n \in \N$ and consider the Cauchy sequence $(\gamma_{n+m})_{m \in \N}$ in $\wtbola{\gamma_n}{r_n}$. By the choice of $r_n$, \cref{lem:complete_on_local_ball} implies that $\gamma_\infty|_{[-n,n]}$ is a Filippov solution of $F$. Since this is valid for every $n$, it follows that $\gamma_\infty$ is a Filippov solution of $F$, so $\gamma_\infty \in \wt M$.

We must finally prove that $\gamma_\infty \in D \cap U$. To this end we notice that, since $\gamma_\infty$ is the limit of $(\gamma_n)_{n \in \N}$, then we have
	\begin{equation*}
	\wt d(\gamma_n, \gamma_\infty) \leq \sum_{k=0}^{\infty} \wt d(\gamma_{n+k}, \gamma_{n+k+1}) < \sum_{k=0}^{\infty} \frac{r_{n+k}}{3} < \sum_{k=0}^{\infty} \frac{r_n}{3^{k+1}} = \frac{r_n}{2} < r_n.
\end{equation*}
so $\gamma_\infty \in \wtbola{\gamma_n}{r_n} \subseteq A_n$. This implies that $\gamma_\infty \in D = \bigcap_{n \in \N} A_n$, and also that $\gamma_\infty \in U$, since $\wtbola{\gamma_0}{r_0} \subseteq U \cap A_0$.
\end{proof}

Finally, we prove that the metric space $(\wt M, \wt d)$ is perfect, that is, has no isolated points.

\begin{proposition}
\label{prop:orbit.space.isolated.points}
Let $M$ be a manifold and $F$ be a bounded Filippov system on $M$.
The orbit space $\wt M$ is a perfect space.
\end{proposition}
\begin{proof}
Given any orbit $\gamma \in \wt M$, we can apply the flow to it for every $t \in \R$ to obtain a family of orbits $\{\wt \Phi^t(\gamma)\}_{t \in \R}$. To measure the distance between the original orbit and one of its translations, let us fix $\tau \in \R$. Since the Filippov vector field is bounded by $\nor{F}$ and each orbit is differentiable by parts, then from the mean value inequality it follows that, for any interval $I \subseteq \R$ and any orbit $\alpha \in \wt M$, we have $|\alpha(t) - \alpha(t')| \leq \nor{F}|t-t'|$. Then
    \begin{equation*}
    \wt d(\gamma, \wt \Phi^\tau(\gamma)) = \sum_{i \in \Z} \frac{1}{2^{|i|}} \sup_{i \leq t < t+1} |\gamma(t) - \gamma(\tau+t)| \leq 3\nor{F}|\tau|.
    \end{equation*}
Therefore we conclude that, as $\tau$ approaches $0$, $\wt \Phi^\tau(\gamma)$ approaches $\gamma$, so $\gamma$ is not an isolated point of $\wt M$.
\end{proof}

\begin{proof}[Proof of {\cref{theo:topological_properties_of_orbit_space}}]
This is just \cref{prop:orbit.space.is.separable,prop:orbit.space.is.Baire,prop:orbit.space.isolated.points}.
\end{proof}

\subsection{Transitivity}
\label{ssec:transitivity}

We now investigate the transitivity property.  The first proposition is an immediate consequence of the topological properties proven in the last subsection.

\begin{proposition}[Transitivity equivalence]
\label{theo:TopTransitivy}
Let $M$ be a manifold and $F$ a bounded Filippov system on $M$ such that $\Tange$ is finite and each tangency point has finite multiplicity.
For the flow $\wt \Phi^t$ on $\wt M$, topological transitivity is equivalent to transitivity (having a dense orbit).
\end{proposition}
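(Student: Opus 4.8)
The plan is to reduce this proposition to the Birkhoff transitivity theorem (Theorem~\ref{theo:birkhoff.transitivity}), which was stated precisely for this purpose. That theorem requires the underlying space to be a separable, complete metric space without isolated points, and it asserts the equivalence of topological transitivity and transitivity for any continuous map on such a space. So the proof is essentially a matter of checking that all hypotheses are in place for the orbit space $\wt M$ equipped with the flow $\wt \Phi^t$.

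First I would recall that $\wt M$ with the distance $\wt d$ (equivalently $\wt d_{\sup}$) is a metric space, as established earlier in this section. Then I would invoke the three structural propositions already proven: Proposition~\ref{prop:orbit.space.is.separable} gives separability (using that $\Tange$ is finite and every tangency point has finite multiplicity, which are standing assumptions on $Z$), Proposition~\ref{prop:orbit.space.is.complete} gives completeness, and Proposition~\ref{prop:orbit.space.isolated.points} gives that $\wt M$ has no isolated points. Together these verify exactly the hypotheses of Theorem~\ref{theo:birkhoff.transitivity}.

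The one genuinely non-formal point is that Birkhoff's theorem is stated for a single continuous map, whereas here we have a continuous flow $\wt \Phi^t$, $t \in \R$. I would address this by applying the theorem to the time-one map $\wt \Phi^1 \colon \wt M \to \wt M$, which is continuous by the proposition establishing continuity of $\wt \Phi$. One then checks that topological transitivity of the flow is equivalent to topological transitivity of the time-one map, and that a dense orbit of the time-one map yields a dense orbit of the flow and vice versa (the orbit of the flow through $\gamma$ contains the orbit of the time-one map through $\gamma$, and conversely the flow orbit is a continuous image of $\R$ swept out from the discrete orbit, so density transfers). This bridging between the discrete and continuous pictures is the main (mild) obstacle; everything else is a direct citation of the previously established results.

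Concretely, the argument runs: by Propositions~\ref{prop:orbit.space.is.separable}, \ref{prop:orbit.space.is.complete} and~\ref{prop:orbit.space.isolated.points}, $\wt M$ is a separable, complete metric space with no isolated points; the time-one map $\wt \Phi^1$ is continuous; hence by Theorem~\ref{theo:birkhoff.transitivity}, $\wt \Phi^1$ is topologically transitive if and only if it has a dense orbit; finally, transferring these notions between $\wt \Phi^1$ and the flow $\wt \Phi^t$ (using continuity of the flow in $t$ to pass from the discrete orbit $\{\wt \Phi^n(\gamma)\}_{n \in \Z}$ to the continuous orbit $\{\wt \Phi^t(\gamma)\}_{t \in \R}$ and back) yields the claimed equivalence for the flow on $\wt M$.
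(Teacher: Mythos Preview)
Your overall strategy---invoke Propositions~\ref{prop:orbit.space.is.separable}, \ref{prop:orbit.space.is.complete}, \ref{prop:orbit.space.isolated.points} and then appeal to Birkhoff---is exactly what the paper does; its proof is a single sentence citing those three results together with Theorem~\ref{theo:birkhoff.transitivity}.

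The one place you go beyond the paper is the bridging step through the time-one map $\wt\Phi^1$, and here there is a genuine error. You assert that topological transitivity of the flow is equivalent to topological transitivity of $\wt\Phi^1$, and that a dense flow orbit is equivalent to a dense $\wt\Phi^1$-orbit. Neither equivalence holds in general: take the unit-speed rotation flow on a circle of circumference~$1$, which is topologically transitive and has every orbit dense, while its time-one map is the identity and enjoys neither property. In the direction you actually need (flow topologically transitive $\Rightarrow$ flow has a dense orbit), the chain through $\wt\Phi^1$ breaks at the very first step, since topological transitivity of the flow does not force topological transitivity of $\wt\Phi^1$.

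The paper sidesteps this by citing Theorem~\ref{theo:birkhoff.transitivity} directly; the reference \cite{grosse2011linear} treats the continuous-time case as well as maps. If you want a self-contained argument, the clean fix is to run the Baire-category proof of Birkhoff for the flow itself: for a countable base $\{U_n\}$ of $\wt M$, each set $\bigcup_{t>0}\wt\Phi^{-t}(U_n)$ is open (by continuity of $\wt\Phi$) and dense (by topological transitivity), so by completeness and Baire their intersection is nonempty, and any element of it has dense forward flow orbit. No passage to a discrete-time map is needed.
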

\begin{proof}
The orbit space $\wt M$ is a perfect, separable and Baire metric space (\cref{prop:orbit.space.is.separable,prop:orbit.space.is.Baire,prop:orbit.space.isolated.points}), so this follows from Birkhoff's transitivity theorem (\cref{theo:birkhoff.transitivity}).
\end{proof}

\begin{proposition}
\label{prop:point.transitivity.above.implies.point.transitity.below}
Let $M$ be a manifold and $F$ be a bounded Filippov vector field on $M$. If the orbit space flow $\wt \Phi$ is transitive, then $F$ is also transitive.
\end{proposition}
\begin{proof}
To show $F$ is transitive, we must find an orbit $\gamma \in \wt M$ that is dense in $M$. Let $p \in M$ and $\delta > 0$, and take $\alpha \in \wt M$ such that $\alpha(0)=p$. Since the flow $\wt \Phi$ is transitive, there is a dense orbit $\Gamma$ in the orbit space $\wt M$. Define $\gamma: = \Gamma(0)$. For the given $\delta$ and any $\tau > 1$, take $\varepsilon > 0$ as in \cref{lemma:close.orbits.imply.close.orbit.points}. Since $\Gamma$ is dense in $\wt M$, there is $t_\alpha > 0$ such that $\wt d(\alpha, \wt \Phi^{t_\alpha}(\gamma)) < \varepsilon$, which implies by the choice of $\varepsilon$ that $|\alpha(t) - \wt \Phi^{t_\alpha}(\gamma)(t)| < \delta$ for every $|t| \leq 1$. In particular,
    \begin{equation*}
    |p - \gamma(t_\alpha)| = |\alpha(0) - \wt \Phi^{t_\alpha}(\gamma)(0)| < \delta,
    \end{equation*}
so $\gamma$ is dense in $M$.
\end{proof}

\begin{proposition}
\label{prop:main_transitivity}
Let $M$ be a connected manifold and $F$ be a Filippov system over $M$.
Suppose the set of tangency points $\Tange$ of the Filippov system is finite and there is a subset $\mathcal T \subseteq \Tange$ that satisfies the following properties:
    \begin{properties}
        \item \label{def:tangency.connections} For every $T, T' \in \mathcal T$, there is a $F$-orbit segment $\theta\colon [-s,s] \to M$ connecting $T$ to $T'$. If $T = T'$ we can consider $s=0$;
        \item \label{def:positive.return.tangency} For every orbit $\gamma$, there are a tangency point $T_\gamma \in \mathcal T$ and time $s_\gamma > 0$ such that $\gamma(s_\gamma) = T_\gamma$;
        \item \label{def:negative.return.tangency} For every orbit $\gamma$, there are a tangency point $T_\gamma \in \mathcal T$ and time $s_\gamma < 0$ such that $\gamma(s_\gamma) = T_\gamma$.
    \end{properties}
Then the flow $\wt \Phi$ is topologically transitive on the orbit space $\wt M$.
\end{proposition}
\begin{proof}
Let $\alpha, \beta \in \wt M$ be two orbits and $\varepsilon > 0$ a real number. To prove that the flow $\wt \Phi$ is topologically transitive, we must find an orbit $\gamma \in \wt M$ such that $\wt d(\alpha,\gamma) < \varepsilon$ and, for some $\tau > 0$, $\wt d(\beta, \wt \Phi^\tau(\gamma)) < \varepsilon$. Since the flow $\wt \Phi$ is invertible, this is the equivalent to finding $\tau_\alpha, \tau_\beta > 0$ such that $\wt d(\alpha, \wt \Phi^{-\tau_\alpha}(\gamma)) < \varepsilon$ and $\wt d(\beta, \wt \Phi^{\tau_\beta}(\gamma)) < \varepsilon$.

To construct the orbit $\gamma$, we first cut orbit $\alpha$ at a tangency point $T_\alpha \in \mathcal T$ and orbit $\beta$ at a tangency point $T_\beta \in \mathcal T$, and take an intermediary orbit segment $\theta$ connecting $T_\alpha$ to $T_\beta$ (by \cref{def:tangency.connections}). Then, we glue the negative orbit segment of $\alpha$ to the beginning of $\theta$, and the end of $\theta$ to the positive orbit segment of $\beta$. The resulting curve $\gamma$ is an orbit of the system, since it is made up of orbit segments connected at tangency points.

Take $\delta > 0$ and $\tau > 0$ with respect to the chosen $\varepsilon$, such as in \cref{lemma:close.orbit.points.imply.close.orbits}. We can take the smallest time $t_\alpha$ such that $t_\alpha \geq \tau > 0$ and $\alpha(t_\alpha) = T_\alpha$, for some $T_\alpha \in \mathcal T$ (by \cref{def:positive.return.tangency}), and the smallest time $t_\beta$ such that $-t_\beta \leq -\tau < 0$ and $\beta(-t_\beta) = T_\beta$, for some $T_\beta \in \mathcal T$ (by \cref{def:negative.return.tangency}).

Let $\theta\colon [-s,s] \to M$ be an orbit segment connecting $T_\alpha$ to $T_\beta$ (by \cref{def:tangency.connections}) and define orbit $\gamma$ to be
    \begin{equation*}
    \gamma (t) :=
        \begin{cases}
        \alpha(t + s + t_\alpha),& -\infty < t < -s \\
        \theta(t),& -s < t < s  \\
        \beta(t - s - t_\beta),& s < t < \infty.
        \end{cases}
    \end{equation*}
Since $\tau \leq t_\alpha$, this means $\wt \Phi^{-t_\alpha-s}(\gamma)$ coincides with $\alpha$ on the interval $\left] -\infty, \tau \right]$, so
    \begin{equation*}
    d(\alpha(t), \wt \Phi^{-t_\alpha-s}(\gamma)(t)) = 0 < \delta
    \end{equation*}
for every $|t| \leq \tau$, and likewise
    \begin{equation*}
    d(\alpha(t), \wt \Phi^{t_\beta+s}(\gamma)(t)) = 0 < \delta
    \end{equation*}
for every $|t| \leq \tau$. By the choice of $\delta$ and $\tau$ from \cref{lemma:close.orbit.points.imply.close.orbits}, this implies that $\wt d(\alpha, \wt \Phi^{-t_\alpha-s}(\gamma)) < \varepsilon$ and $\wt d(\beta, \wt \Phi^{t_\beta+s}(\gamma)) < \varepsilon$.
\end{proof}

We are able to apply the above result to the bean and sphere models \cite{BCE, EJV}.

\begin{example}
The dynamics of the orbit space of the bean model system \cite{BCE} is transitive.
\end{example}
\begin{proof}
The system has only $1$ tangency point $p$ (see \cref{fig:bean_model}) and every orbit pass through it infinite times, forward and backwards, so the hypothesis is satisfied.
\end{proof}

\begin{example}
The dynamics of the orbit space the sphere model system \cite{EJV} is transitive.
\end{example}
\begin{proof}
The system has $4$ tangency points; we choose $2$ of them, say $p_+$ or $p_-$ (see \cref{fig:sphere_model}), to be set $\mathcal T$. There is a closed orbit passing through both of them, which implies there are segments connecting one of them to the other, so the hypothesis is satisfied.
\end{proof}

\begin{proof}[Proof of {\cref{theo:transitivity_and_topological_transitivity_in_orbit_space}}]
This is a consequence of \cref{prop:point.transitivity.above.implies.point.transitity.below,prop:main_transitivity}. In particular, notice we do not assume transitivity of the Filippov field in \cref{prop:main_transitivity}, but rather obtain it as a consequence: a PSVF that satisfies the properties of \cref{prop:main_transitivity} has a transitive orbit space, so by \cref{prop:point.transitivity.above.implies.point.transitity.below} we conclude that it is transitive.
\end{proof}

\section{Further directions}

\begin{question}
Is there an example of a PSVF which is transitive, but is not transitive in the orbit space?
\end{question}

This question is an interesting and important one for the theory. It impacts, in spirit, how we understand PSVFs. If the answer is ``yes'', that is, if there is a PSVF which is transitive but not transitive in the orbit space, then we might have to use the orbit space to justify a more natural definition of transitivity. If the answer is ``no'', then both notions of transitivity are the same and hence the use of escaping regions to create transitivity is not actually an ``artificial'' transitivity.

\section*{Acknowledgements}
P. M. was partially financed by the Coordenação de Aperfeiçoamento de Pessoal de Nível Superior
Brasil (CAPES) - grant 141401/2020-6.
R. V. was partially financed by CNPq and Fapesp grants 18/13481-
0, 17/06463-3 and 2024/15612-6.

\printbibliography

\end{document}